\date{}
\newtheorem{defn}{\bf Definition}[section] 
\newtheorem{lem}[defn]{\bf Lemma}
\newtheorem{thm}[defn]{\bf Theorem} 
\newtheorem{rmk}{\bf Remark}
\numberwithin{equation}{section}
\newcommand{\footremember}[2]{%
    \footnote{#2}
    \newcounter{#1}
    \setcounter{#1}{\value{footnote}}%
}
\title{  Error Estimates for a Linearized Fractional Crank-Nicolson FEM  for Kirchhoff type  Quasilinear Subdiffusion Equation with Memory }
\author{%
  Lalit Kumar \footremember{alley}{Department of Mathematics, Indian Institute of Technology Bombay, Mumbai-400076, India. lalitccc528@gmail.com}%
  }
\date{}
\begin{document}
\maketitle

\begin{abstract}
\noindent  In this paper, we develop  a  linearized fractional Crank-Nicolson-Galerkin FEM for Kirchhoff type  quasilinear time-fractional integro-differential equation $\left(\mathcal{D}^{\alpha}\right)$. In general, the solutions of the time-fractional problems  exhibit a weak singularity at  time $t=0$.  This singular behaviour of the solutions is taken into account while deriving the convergence estimates of the developed numerical scheme. We prove that  the proposed  numerical scheme has an  accuracy rate of  $O(M^{-1}+N^{-2})$ in $L^{\infty}(0,T;L^{2}(\Omega))$ as well as in $L^{\infty}(0,T;H^{1}_{0}(\Omega))$, where $M$ and $N$ are the degrees of freedom in the space and time directions respectively.  A numerical experiment is presented to verify the  theoretical results. 

\end{abstract}

\noindent \textbf{Keywords:}
 Fractional Crank-Nicolson scheme,  Finite element method (FEM), Nonlocal, Fractional time derivative, Graded mesh, Integro-differential equation. \\

\noindent \textbf{AMS subject classification.} 34K30, 26A33, 65R10, 60K50.

\section{Introduction}
The goal of this article is to study the following time-fractional PDE with Kirchhoff type nonlocal diffusion coefficient involving memory/feedback
term.\\

\noindent Let $\Omega$ be a convex and bounded subset of $\mathbb{R}^{d}~(d\geq 1)$ with smooth boundary $\partial \Omega$ and $[0,T]$ is a fixed finite time interval. 
 Find the unknown function $u:=u(x,t) :\Omega \times [0,T]\rightarrow \mathbb{R} $ that satisfies 
\begin{equation} \tag{$\mathcal{D}^{\alpha}$} \label{22-5-1}
\begin{aligned}
^{C}\mathcal{D}^{\alpha}_{t}u-\left(1+\int_{\Omega}|\nabla u|^{2}~dx\right)\Delta u&=f(x,t)-\int_{0}^{t}\Delta u(s)~ds ~\text{in}~ \Omega \times (0,T],\\
u(x,0)&=u_{0}(x) \quad \text{in} ~\Omega,\\
u(x,t)&=0 \quad \text{on} ~\partial \Omega \times [0,T],
\end{aligned}
\end{equation}
where initial value $u_{0}$ and   source term $f$ are known. In the equation \eqref{22-5-1}, the notation $^{C}\mathcal{D}_{t}^{\alpha}u$
 denotes the   time-fractional derivative of order $\alpha,~(0<\alpha<1)$ in the Caputo sense \cite{podlubny1998fractional} as
\begin{equation}\label{LS1}
^{C}\mathcal{D}_{t}^{\alpha}u =\frac{1}{\Gamma(1-\alpha)} \int_{0}^{t}\frac{1}{(t-s)^{\alpha}}\frac{\partial u}{\partial s}(s)ds,
\end{equation}
where $\Gamma(\cdot)$ denotes the gamma function.\\

 \noindent In the past decade, researchers are interested to the study differential equations which involve fractional derivatives. The nonlocal property of these fractional derivatives provides a powerful mathematical tool to describe many physical  phenomena accurately and in a realistic manner \cite{almeida2016modeling,luchko2012anomalous}. Therefore, fractional PDEs are being widely  investigated in theory as well as in  numerical perspective \cite{chen2019error,kopteva2019error,lalit,ren2021sharp} nowadays.\\
 
 \noindent There are many physical or biological  processes in which quantity of interest depends on the whole space domain instead of pointwise. For example, displacement of a vibrating string in which length of the string changes during vibrations, diffusion of a bacteria in a jar and many more. These type of phenomena are driven by quasilinear PDEs which contain the Kirchhoff type diffusion coefficient \cite{almeida2017finite,chipot2003remarks,gudi2012finite,kundu2016kirchhoff}.\\
 
 \noindent In real life situations, most of the phenomena  happen in a non-homogeneous medium for which background of the process is required. For example heat conduction materials with memory, transport in heterogeneous media. These types of processes are  modelled by integro-differential equations \cite{ferreira2007memory, kumar2020finite, mahata2021finite,miller1978integrodifferential}.  With the help of  mathematical formulation \eqref{22-5-1}, we analyze the model in which above mentioned  phenomena occur simultaneously.\\
 
 \noindent For the standard diffusion case $(\alpha=1)$ in \eqref{22-5-1}, Lalit et al \cite{kumar2020finite} developed a linearized Crank-Nicolson-Galekin FEM which has second order accuracy in the time direction. Same authors in  \cite{lalit} studied the subdiffusion case \eqref{22-5-1} and proposed a linearized fractional Crank-Nicolson-Galerkin FEM which is also second order accurate in the time direction.   In \cite{lalit}, uniform mesh is used in the time direction which works perfectly well for the sufficiently smooth $(u \in C^{4}[0,T])$ solutions of the problem \eqref{22-5-1}.\\
 
 \noindent In general, the solutions of the time-fractional PDEs have the limited smoothness even if the data is sufficiently smooth \cite{jin2018numerical,jin2019subdiffusion,sakamoto2011initial,stynes2017error}.  This phenomenon makes the time-fractional PDEs more challenging to study  than the standard diffusion case. In \cite{stynes2017error}, Stynes et al proved that the solution  of the following linear time-fractional PDE  
 \begin{equation} \label{22-8-22-1}
\begin{aligned}
^{C}\mathcal{D}^{\alpha}_{t}u-u_{xx}&=f(x,t)\quad \text{in}~ (0,1) \times (0,T],\\
u(x,0)&=u_{0}(x) \quad \text{in} ~[0,1],\\
u(0,t)=u(1,t)&=0 \quad \text{in}~ [0,T],
\end{aligned}
\end{equation}
satisfies 
\begin{equation}\label{23-6-3ab}
    |\partial^{l}_{t}u|\lesssim C\left(1+t^{\alpha-l}\right),~~l=0,1,2,3,
\end{equation}
for some positive constant $C$ which is independent of $t$ but may depend on $T$. Further Jin et al in \cite{jin2019subdiffusion} studied the following linear time-fractional PDE with time dependent diffusion coefficient 
 
 \begin{equation} \label{22-8-22-3}
\begin{aligned}
^{C}\mathcal{D}^{\alpha}_{t}u-\nabla\cdot (a(x,t))\nabla u(x,t))&=f(x,t)\quad \text{in}~ \Omega  \times (0,T],\\
u(x,0)&=u_{0}(x) \quad \text{in} ~\Omega,\\
u(x,t)&=0 \quad \text{on}~ \partial \Omega \times [0,T].
\end{aligned}
\end{equation}
They have proved that the solution $u$ of \eqref{22-8-22-3} follows $ \|\partial_{t}u\|\lesssim C~t^{\alpha-1}$ for smooth initial data $u_{0}(x)$.\\

\noindent We  derive the convergence estimates for the problem \eqref{22-5-1} under the assumption that the solution $u$ of \eqref{22-5-1} satisfies \eqref{23-6-3ab}. Although, to prove the estimate \eqref{23-6-3ab} for the solutions of \eqref{22-5-1}  is an open problem till now. For the  solutions of the type \eqref{23-6-3ab}, we have a loss of accuracy if we use a uniform mesh in the time direction (see Table  \ref{table2}, Section \ref{26-6-1}). In this work,  we recover this loss of accuracy by developing a  numerical scheme on a non-uniform time mesh in  the time direction for the non-smooth solutions \eqref{23-6-3ab} of the problem \eqref{22-5-1}.  \\

 \noindent The lack of smoothness \eqref{23-6-3ab} of the solutions of \eqref{22-5-1} is compensated by taking the graded mesh in the time direction, which are concentrated near $t=0$ (see Figure 1, Section \ref{26-6-1} ) \cite{ren2021sharp,stynes2017error}. These graded mesh on $[0,T]$ are defined by 
 \begin{equation}\label{23-5-1c}
    t_{n}=T\left(\frac{n}{N}\right)^{r},~n=0,1,\dots,N,
\end{equation}
where $r\geq 1$  is called grading exponent and $N$ is any positive integer.\\

\noindent In this work, we approximate the Caputo fractional derivative \eqref{LS1} using Alikhanov's L2-1$_{\sigma},~(0<\sigma<1)$ scheme \cite{alikhanov2015new} on the time graded mesh \eqref{23-5-1c}. This scheme approximate the Caputo fractional derivative  at $t_{n-\sigma} \in [0,T]$ and   carry out a local truncation error of $O\left(t_{n-\sigma}^{-\alpha}~N^{-\min\{r\alpha,~3-\alpha\}}\right)$ \cite{chen2019error}.\\

 \noindent The presence of the nonlocal nonlinearity $\left(1+\int_{\Omega}|\nabla u|^{2}~dx\right)$ in the problem \eqref{22-5-1} creates a major  difficulty to evaluate the numerical solutions of  \eqref{22-5-1}. It can be seen through the non-sparsity of the  Jacobian of the corresponding nonlinear system \cite{gudi2012finite}.  For solving the nonlinear system with this non-sparse Jacobian, the Newton-Raphson  method demands high computational cost and huge computer storage. We encounter this difficulty by developing a linearized approximation \eqref{25-5-3} of nonlinear Kirchhoff term at $t_{n-\sigma}$ on the  time graded mesh. We show that this approximation  commits a linearization error of $O\left(N^{-\min\{r\alpha,~2 \}}\right)$.\\
 
 \noindent Further complexity of the problem \eqref{22-5-1} is to discretize the memory term $\left(\int_{0}^{t}\Delta u(s)~ds\right)$ in such a way that  takes into account of weak singularity of the solution $u$  at $t=0$ and remains consistent with the linearization scheme \eqref{25-5-3}. To approximate the memory term on $[0,t_{n-\sigma}]$, we break the  time interval $[0,t_{n-\sigma}]$ into $[0,t_{1}]$ and  $[t_{1},t_{n-\sigma}]$.  In the first time interval $[0,t_{1}]$ we approximate the memory term on  $[0,t_{1-\sigma}]$ using right rectangle rule \eqref{27-5-5}. The second interval  $[t_{1},t_{n-\sigma}]$ is divided into two parts $[t_{1},t_{n-1}]$ and $[t_{n-1},t_{n-\sigma}]$, in the first part we apply composite trapezoidal rule and in the later part we use left rectangle rule.  For this approximation, we prove that the quadrature error is of $O\left(N^{-\min\{r(1+\alpha),~2\}}\right)$ on the time graded mesh.\\
 
 \noindent  Based on the above mentioned approximations, we construct a fully discrete formulation \eqref{27-5-9a}-\eqref{27-5-9b} of the problem \eqref{22-5-1} by discretizing the space domain $\Omega$. Traditionally, fractional Gronwall's inequality \cite{liao2019discrete,ren2021sharp} is applied to conclude the global rate of convergence for the time-fractional problems. But, these types of inequalities are no more applicable in our problem \eqref{22-5-1} due to  the  presence of the memory term. We overcome this difficulty by presenting  a  new approach in which we use a new weighted $H^{1}(\Omega)$ norm \eqref{22-6-4}. We prove that our developed numerical scheme has an optimal convergence rate of  $O(M^{-1}+N^{-2})$, under suitably chosen grading parameter $(r)$ in the graded mesh. This  result is supported by conducting a numerical experiment.   In this way, this is  the first work in   the literature which  contributes a robust and efficient second-order (in time ) accurate  linearized numerical scheme for the Kirchhoff type time-fractional  problems with memory operator.  \\

   \noindent \textbf{This article is structured as follows.} Section 2 contains some prelimimaries concepts. In Section 3, we prove  convergence analysis of the  developed linearized fractional Crank-Nicolson-Galerkin FEM. Numerical implementation in Section 4 confirms the reliability of the obtained theoretical results.  In  Section 5, we  present some concluding remarks. \\

\section{Preliminaries}
In this section, we set up some notations and state some preliminary results.

\noindent \textbf{Notations:} Let $L^{2}(\Omega)$ be the space of square integrable functions with the norm $\|\cdot\|$ which is induced by the inner product $(\cdot, \cdot)$. Denote $H^{m}(\Omega)$ by the standard Sobolev spaces with the norm $\|\cdot\|_{m}$, where $m$ is any positive number. Set $H^{1}_{0}(\Omega)=\left\{ u \in  H^{1}(\Omega)~:~ u=0~\text{on}~\partial \Omega\right\}$.

\noindent For any Hilbert space $X$, the space $L^{2}(0,T;X)$ consists of all measurable functions $\phi : [0,T] \rightarrow X$ with the norm 
\begin{equation}
    \|\phi\|_{L^{2}(0,T;X)}^{2}= \int_{0}^{T}\|\phi(t)\|^{2}_{X}~dt < \infty,
\end{equation}
and the norm on $L^{2}_{\alpha}(0,T;X)$ is given by 
\begin{equation}
    \|\phi\|_{L^{2}_{\alpha}(0,T;X)}^{2}= \sup_{t\in (0,T)}\left(\frac{1}{\Gamma(\alpha)}\int_{0}^{t}(t-s)^{\alpha-1}\|\phi(s)\|^{2}_{X}~dt\right) < \infty.
\end{equation}
The space $L^{\infty}(0,T;X)$ consists of all measurable functions $\phi: [0,T] \rightarrow X$ with the norm 
\begin{equation}
    \|\phi\|_{L^{\infty}(0,T;X)}=\text{ess}\sup_{t\in (0,T) }\|\phi(t)\|_{X} < \infty.
\end{equation}
\noindent For any two quantities $a$ and $b$, the notation $a\lesssim b$ means that there exists a generic positive constant $C$ such that $a \leq Cb$, where $C$ is independent of discretization parameters.\\

\noindent \textbf{Weak  formulation:} The weak formulation of  the problem \eqref{22-5-1} is to seek $u(t)\in H^{1}_{0}(\Omega),~t\in [0,T]$ such that following equations hold for all $v \in H^{1}_{0}(\Omega)$
\begin{equation} \tag{$\mathcal{W}^{\alpha}$} \label{22-5-2}
\begin{aligned}
\left(^{C}\mathcal{D}^{\alpha}_{t}u,v\right)+\left(1+\|\nabla u(t)\|^{2}\right)(\nabla u, \nabla v)&=
(f,v)+\int_{0}^{t}(\nabla  u(s),\nabla v)~ds ~\text{a.e.}~t \in (0,T],\\
u(x,0)&=u_{0}(x) \quad \text{in} ~\Omega.\\
\end{aligned}
\end{equation}
\noindent The following theorem establishes the existence, uniqueness, and a priori bounds on the solution of \eqref{22-5-2}.
\begin{thm}\label{23-6-1a}\cite{lalit,kundu2016kirchhoff} Suppose that  $u_{0}\in H^{1}_{0}(\Omega)$ and $f \in L^{\infty}(0,T;L^{2}(\Omega))$. Then there exists a unique solution to the problem \eqref{22-5-2}. Further the solution $u$ of  \eqref{22-5-2} satisfies the following a priori bounds which are valid uniformly in time 
\begin{equation}\label{23-6-1}
\|u\|_{L^{\infty}(0,T;L^{2}(\Omega))}+\|u\|_{L^{2}_{\alpha}(0,T;H^{1}_{0}(\Omega))}\lesssim \|u_{0}\|+\|f\|_{L^{\infty}(0,T;L^{2}(\Omega))},
\end{equation}
and 
\begin{equation}\label{23-6-2}
\|u\|_{L^{\infty}(0,T;H^{1}_{0}(\Omega))}+\|u\|_{L^{2}_{\alpha}(0,T;H^{2}(\Omega))}\lesssim \|u_{0}\|_{1}+\|f\|_{L^{\infty}(0,T;L^{2}(\Omega))}:=K.
\end{equation}
\end{thm}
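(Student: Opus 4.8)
The plan is to establish the theorem by the Faedo--Galerkin method, deriving the a priori bounds \eqref{23-6-1}--\eqref{23-6-2} as the central step, since these simultaneously make the Galerkin approximations global in time and supply the compactness needed to pass to the limit; uniqueness then follows from a Gronwall argument on the difference of two solutions. Concretely, I would let $\{w_j\}_{j\geq 1}$ be the $L^{2}(\Omega)$-orthonormal eigenfunctions of $-\Delta$ on $H^{1}_{0}(\Omega)$ and seek $u_m(t)=\sum_{j=1}^{m}c_j^m(t)w_j$ satisfying \eqref{22-5-2} tested against each $w_j$. This reduces the problem to a coupled system of nonlinear Volterra-type fractional integral equations for the coefficients $c_j^m$, whose local-in-time solvability follows from a fixed-point argument; the estimates below then extend the solution to all of $[0,T]$.

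The heart of the proof is the two energy estimates. For \eqref{23-6-1} I would test with $v=u_m$ and use the fractional coercivity inequality
\begin{equation*}
\left({}^{C}\mathcal{D}^{\alpha}_{t}u_m,u_m\right)\geq \tfrac{1}{2}\,{}^{C}\mathcal{D}^{\alpha}_{t}\|u_m\|^{2},
\end{equation*}
together with the positivity of the Kirchhoff coefficient $1+\|\nabla u_m\|^{2}\geq 1$, to obtain
\begin{equation*}
\tfrac{1}{2}\,{}^{C}\mathcal{D}^{\alpha}_{t}\|u_m\|^{2}+\|\nabla u_m\|^{2}\leq (f,u_m)+\int_{0}^{t}(\nabla u_m(s),\nabla u_m(t))\,ds.
\end{equation*}
The source term is controlled by Cauchy--Schwarz, Poincar\'e and Young's inequality, while the memory term is split by Cauchy--Schwarz and Young's inequality so that part of $\|\nabla u_m(t)\|^{2}$ is absorbed into the left-hand side and the remaining $\int_{0}^{t}\|\nabla u_m(s)\|^{2}\,ds$ is retained for the Gronwall step. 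Passing to Riemann--Liouville integrated form and invoking a fractional Gronwall inequality \cite{liao2019discrete,ren2021sharp} then yields \eqref{23-6-1}.

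For the stronger bound \eqref{23-6-2} I would instead test with $v=-\Delta u_m$, which is legitimate in the Galerkin space since the $w_j$ are Laplacian eigenfunctions. Using $({}^{C}\mathcal{D}^{\alpha}_{t}u_m,-\Delta u_m)=({}^{C}\mathcal{D}^{\alpha}_{t}\nabla u_m,\nabla u_m)\geq\tfrac{1}{2}\,{}^{C}\mathcal{D}^{\alpha}_{t}\|\nabla u_m\|^{2}$ and integrating by parts produces a coercive $(1+\|\nabla u_m\|^{2})\|\Delta u_m\|^{2}$ term, after which the same Young/fractional-Gronwall machinery delivers the $L^{\infty}(0,T;H^{1}_{0})$ and $L^{2}_{\alpha}(0,T;H^{2})$ bounds. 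With the uniform bounds secured, weak and weak-$*$ compactness extract limits of $u_m$; the only delicate passage is the nonlinear factor $1+\|\nabla u_m\|^{2}$, for which I would upgrade the weak convergence of $\nabla u_m$ to strong convergence in $L^{2}(0,T;L^{2}(\Omega))$ via an Aubin--Lions--type compactness result adapted to the fractional derivative, so that $\|\nabla u_m\|^{2}\to\|\nabla u\|^{2}$.

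Uniqueness follows by setting $w=u_1-u_2$ for two solutions, testing the difference equation with $w$, and using the a priori $H^{1}_{0}$ bound $K$ to estimate the Kirchhoff difference $(1+\|\nabla u_1\|^{2})\nabla u_1-(1+\|\nabla u_2\|^{2})\nabla u_2$ by a local Lipschitz bound controlled by $\|\nabla w\|$; a final fractional Gronwall argument forces $w\equiv 0$. The main obstacle throughout is that the memory integral destroys the purely fractional structure, so the classical fractional Gronwall lemma must be supplemented to absorb an additional ordinary integral term: handling this coupled singular/nonsingular kernel while simultaneously taming the nonlocal Kirchhoff coefficient is where the genuine difficulty lies, and it is precisely this coupling that later motivates the weighted $H^{1}(\Omega)$ norm used in the discrete analysis.
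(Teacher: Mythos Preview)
The paper does not actually prove this theorem: it is stated with the citations \cite{lalit,kundu2016kirchhoff} and no argument is given in the text, so there is no ``paper's own proof'' to compare against. Your Faedo--Galerkin scheme with the two energy tests $v=u_m$ and $v=-\Delta u_m$, together with Alikhanov's inequality $({}^{C}\mathcal{D}^{\alpha}_{t}u_m,u_m)\geq \tfrac12\,{}^{C}\mathcal{D}^{\alpha}_{t}\|u_m\|^{2}$, is precisely the standard route for Kirchhoff-type subdiffusion problems and is almost certainly what the cited references do.

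One small correction: you do not in fact need a \emph{fractional} Gr\"onwall inequality here, and invoking the discrete ones from \cite{liao2019discrete,ren2021sharp} is not quite right. After applying the Riemann--Liouville integral $I^{\alpha}$ to your differential inequality, the memory contribution becomes
\[
I^{\alpha}\Bigl[\int_{0}^{\cdot}\|\nabla u_m(s)\|^{2}\,ds\Bigr](t)=I^{1}\bigl[I^{\alpha}\|\nabla u_m\|^{2}\bigr](t)=\int_{0}^{t}I^{\alpha}\bigl[\|\nabla u_m\|^{2}\bigr](s)\,ds,
\]
so setting $\psi(t)=I^{\alpha}[\|\nabla u_m\|^{2}](t)$ you obtain an inequality of the form $\psi(t)\le A+C\int_{0}^{t}\psi(s)\,ds$, to which the \emph{classical} Gr\"onwall lemma applies directly and yields both the $L^{\infty}(0,T;L^{2})$ and the $L^{2}_{\alpha}(0,T;H^{1}_{0})$ bounds simultaneously. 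The remark at the end of your proposal that the memory term forces the later introduction of the weighted $H^{1}$ norm \eqref{22-6-4} is well observed, but that difficulty is genuinely a discrete-level phenomenon; at the continuous level the argument closes cleanly as above.
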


\noindent \textbf{Semi discrete formulation:} For the semi discrete formulation of the problem \eqref{22-5-1}, we keep the time variable continuous and discretize the space domain $\Omega$. The space domain $\Omega$ is divided into a quasi-uniform triangulation using a conforming FEM \cite{thomee2007galerkin}. We consider a finite element subspace $\mathcal{V}_{h}$ of $H^{1}_{0}(\Omega)$ which consists of piece-wise continuous  linear functions on each triangle of the triangulation.\\

\noindent Find $u_{h}\in \mathcal{V}_{h}$ such that the following equations holds for all $v_{h}\in \mathcal{V}_{h}$ and a.e. $t \in [0,T]$
\begin{equation} \tag{$\mathcal{S}^{\alpha}$} \label{23-6-3}
\begin{aligned}
\left(^{C}\mathcal{D}^{\alpha}_{t}u_{h},v_{h}\right)+\left(1+\|\nabla u_{h}(t)\|^{2}\right)(\nabla u_{h}, \nabla v_{h})&=
(f_{h},v_{h})+\int_{0}^{t}(\nabla  u_{h}(s),\nabla v_{h})~ds,\\
u_{h}(x,0)&=R_{h}u_{0} \quad \text{in} ~\Omega,\\
\end{aligned}
\end{equation}
where $f_{h}$ is the $L^{2}$-projection of $f$ into $\mathcal{V}_{h}$ and $R_{h}u_{0}$ is the Ritz-projection of $u_{0}$ into $\mathcal{V}_{h}$ \cite{thomee2007galerkin}.\\

\noindent The existence, uniqueness and a priori bounds  on  the semi discrete solution $u_{h}$ of \eqref{23-6-3} are  established along the same lines of the proof of Theorem \ref{23-6-1a}. For further  analysis, we assume that the solution $u$ of the problem \eqref{22-5-1} satisfies the following realistic regularity \cite{jin2018numerical,jin2019subdiffusion,stynes2017error}
\begin{equation}\label{28-6-1}
    \|\partial^{l}_{t}u\|_{p}\lesssim t^{\alpha-l},~~l=1,2,3,~p=0,1,2.
\end{equation}

\noindent  To derive the semi discrete error analysis, we modify the  standard Ritz-Volterra projection operator \cite{cannon1990priori} of $u$ in such a way that it removes the difficulty  arising from the nonlocal nonlinear diffusion coefficient. Modified Ritz-Volterra  projection operator $w:[0,T]\rightarrow \mathcal{V}_{h}$ of $u$ satisfies
\begin{equation}\label{23-6-4}
    \left(1+\|\nabla u(t)\|^{2}\right)(\nabla (u-w), \nabla v_{h})=\int_{0}^{t}(\nabla  (u-w)(s),\nabla v_{h})~ds~~\forall~v_{h}\in \mathcal{V}_{h}.
\end{equation}

\noindent It can be seen that \eqref{23-6-4} is a system of Volterra type integro-differential equations. Due to the positivity of diffusion coefficient and linearity of \eqref{23-6-4} ensure the existence and uniqueness of $w$ \cite{cannon1990priori}.\\

\noindent Using the idea of the proof of Lemma 3.3 and Lemma 3.4 from  \cite{kumar2020finite}, one can derive the following stability results on the modified Ritz-Volterra projection operator
\begin{equation}\label{23-6-5aa}
    \|\nabla w\| \lesssim K\quad \text{and}\quad  \|\Delta_{h} w\| \lesssim K,
\end{equation}
where $\Delta_{h}w$ is the discrete Laplacian operator defined by
 $\Delta_{h}:\mathcal{V}_{h}\rightarrow \mathcal{V}_{h}$ such that 
     \begin{equation}\label{8-7-1w}
         (-\Delta_{h}u_{h},v_{h})=(\nabla u_{h},\nabla v_{h})~\forall~u_{h},v_{h} \in \mathcal{V}_{h}.
     \end{equation}

\begin{thm}\cite{cannon1990priori}\label{23-6-5} Suppose that the solution $u$ of the equation $(\mathcal{D}^{\alpha})$ satisfies regularity estimate  \eqref{28-6-1}. Then the modified Ritz-Volterra projection operator $w$ of $u$ has  the following  best approximation properties 

\begin{equation}\label{23-6-5a}
    \|u-w\|+h\|\nabla(u-w)\|\lesssim~h^{2},
    \end{equation}
    and 
\begin{equation}\label{23-6-7}
     \|~^{C}D_{t}^{\alpha}(u-w)\|+h\|\nabla \left(~^{C}D^{\alpha}_{t}(u-w)\right)\|\lesssim ~h^{2}.
\end{equation}
\end{thm}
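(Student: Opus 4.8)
The plan is to reduce both estimates to the classical Ritz–Volterra analysis of Cannon–Lin, adapted to the state-dependent coefficient $a(t):=1+\|\nabla u(t)\|^{2}$. First I record that $a$ is bounded above and below: positivity gives $a(t)\ge 1$, while the a priori bound \eqref{23-6-2} gives $\|\nabla u(t)\|\lesssim K$, so $1\le a(t)\lesssim 1+K^{2}$ uniformly in $t$. I then introduce the ordinary elliptic Ritz projection $R_{h}$, split the error as $u-w=\eta+\theta$ with $\eta:=u-R_{h}u$ and $\theta:=R_{h}u-w\in\mathcal{V}_{h}$, and recall the standard estimate $\|\eta\|+h\|\nabla\eta\|\lesssim h^{2}\|u\|_{2}$, finite by \eqref{28-6-1}.

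For the $H^{1}$ bound I substitute the splitting into \eqref{23-6-4} and use Ritz orthogonality $(\nabla\eta,\nabla v_{h})=0$ to obtain the error equation
\begin{equation*}
a(t)(\nabla\theta(t),\nabla v_{h})=\int_{0}^{t}(\nabla\eta(s)+\nabla\theta(s),\nabla v_{h})\,ds,\qquad v_{h}\in\mathcal{V}_{h}.
\end{equation*}
Choosing $v_{h}=\theta(t)$, dividing by $a(t)\ge1$ and applying Cauchy–Schwarz gives $\|\nabla\theta(t)\|\le\int_{0}^{t}(\|\nabla\eta(s)\|+\|\nabla\theta(s)\|)\,ds$; since $\|\nabla\eta\|\lesssim h$, Gronwall's inequality yields $\|\nabla\theta(t)\|\lesssim h$, and combining with $\|\nabla\eta\|\lesssim h$ proves $h\|\nabla(u-w)\|\lesssim h^{2}$.

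The $L^{2}$ bound is the first genuine difficulty, caused by the memory term. I run an Aubin–Nitsche argument: let $\phi$ solve $-\Delta\phi=\theta(t)$ with $\phi|_{\partial\Omega}=0$, so $\|\phi\|_{2}\lesssim\|\theta(t)\|$, and write $\|\theta(t)\|^{2}=(\nabla\theta(t),\nabla\phi)$. Splitting $\phi=(\phi-R_{h}\phi)+R_{h}\phi$, the nonconforming part is $O(h^{2}\|\theta(t)\|)$ by the $H^{1}$ bound and Ritz approximation, while the conforming part is evaluated through the error equation. Here Ritz orthogonality again kills the $\eta$-contribution, and rewriting $(\nabla\theta(s),\nabla\phi)=(\theta(s),\theta(t))$ turns the remaining Volterra term into $(\int_{0}^{t}\theta(s)\,ds,\theta(t))$; after cancelling one factor of $\|\theta(t)\|$ this leaves $\|\theta(t)\|\lesssim h^{2}+\int_{0}^{t}\|\theta(s)\|\,ds$, and Gronwall gives $\|\theta\|\lesssim h^{2}$, hence \eqref{23-6-5a}.

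For the fractional-derivative estimate \eqref{23-6-7} I apply $^{C}D_{t}^{\alpha}$ to \eqref{23-6-4}. On the right, differentiating the Volterra integral turns it into the weakly singular Riemann–Liouville integral $\frac{1}{\Gamma(1-\alpha)}\int_{0}^{t}(t-s)^{-\alpha}(\nabla(u-w)(s),\nabla v_{h})\,ds$, already controlled by the bounds just proved. The main obstacle is the left side $^{C}D_{t}^{\alpha}[a(t)(\nabla(u-w),\nabla v_{h})]$: because the Caputo operator obeys no Leibniz rule, this does not factor as $a(t)(\nabla\,{}^{C}D_{t}^{\alpha}(u-w),\nabla v_{h})$. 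I would isolate that leading term and collect the remainder as a commutator between multiplication by $a$ and the fractional integral $I^{1-\alpha}$, whose kernel pairs the integrable singularity of $a'(t)=2(\nabla u,\nabla\partial_{t}u)\lesssim t^{\alpha-1}$ (from \eqref{28-6-1}) against $(t-s)^{-\alpha}$. Writing $^{C}D_{t}^{\alpha}(u-w)=({}^{C}D_{t}^{\alpha}u-R_{h}\,{}^{C}D_{t}^{\alpha}u)+(R_{h}\,{}^{C}D_{t}^{\alpha}u-{}^{C}D_{t}^{\alpha}w)$ then reduces \eqref{23-6-7} to the same perturbed Ritz–Volterra equation with data that is $O(h^{2})$, so the $H^{1}$ and duality arguments above apply essentially verbatim. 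Establishing the requisite bounds on these commutator/remainder terms — ensuring they stay $O(h^{2})$ despite the singular kernels — is the step I expect to be the most delicate.
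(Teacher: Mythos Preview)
Your argument for \eqref{23-6-5a} is exactly the Cannon--Lin Lemma~3.1 reduction that the paper defers to, so that part is fine. (In fact, notice that your error equation for $\theta$ becomes homogeneous once you use $(\nabla\eta,\nabla v_h)=0$ on \emph{both} sides: because the elliptic and memory bilinear forms in \eqref{23-6-4} are the \emph{same} form $(\nabla\cdot,\nabla\cdot)$ up to the scalar $a(t)$, the ordinary Ritz projection $R_hu$ already satisfies \eqref{23-6-4}, so $w=R_hu$ and $\theta\equiv0$ here.)

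For \eqref{23-6-7} there is a genuine gap, and you have put your finger on it yourself: applying $^{C}D_{t}^{\alpha}$ directly to \eqref{23-6-4} produces a commutator because the Caputo operator has no product rule, and you do not bound it. Worse, bounding that commutator would in any case require control of $\partial_t(u-w)$, so the route the paper intends (Cannon--Lin, Lemma~3.2) is both cleaner and logically prior: differentiate \eqref{23-6-4} with the \emph{ordinary} time derivative, which does obey Leibniz, to obtain
\[
a(t)\bigl(\nabla\partial_t(u-w),\nabla v_h\bigr)=\bigl(1-a'(t)\bigr)\bigl(\nabla(u-w),\nabla v_h\bigr),
\]
and then rerun your $H^{1}$ and duality machinery to get $\|\partial_t(u-w)\|+h\|\nabla\partial_t(u-w)\|\lesssim h^{2}(1+t^{\alpha-1})$, using $|a'(t)|\lesssim t^{\alpha-1}$ and $\|\partial_t u\|_{2}\lesssim t^{\alpha-1}$ from \eqref{28-6-1}. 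Since $^{C}D_{t}^{\alpha}(u-w)=\frac{1}{\Gamma(1-\alpha)}\int_0^t(t-s)^{-\alpha}\partial_s(u-w)\,ds$, the estimate \eqref{23-6-7} then follows from the Beta integral $\int_0^t(t-s)^{-\alpha}s^{\alpha-1}\,ds=B(1-\alpha,\alpha)$, with no commutator anywhere.
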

\begin{proof}
This theorem is proved by following the arguments of the  proof of Lemma 3.1 and Lemma 3.2 from \cite{cannon1990priori}. 
\end{proof}

\section{Linearized fractional Crank-Nicolson-Galerkin FEM}

\noindent In this section,  we obtain the fully discrete formulation of the problem \eqref{22-5-1} by discretizing the domain in the space direction as well as in the time direction. In the space direction  we use a conforming FEM \cite{thomee2007galerkin}  and in the time direction  we use graded mesh $t_{n}=T\left(\frac{n}{N}\right)^{r},~n=0,1,\dots,N$, with the  non-uniform time steps  $\tau_{n}=t_{n}-t_{n-1}, (n\geq 1)$. This graded mesh satisfies the following properties
 \cite{chen2019error,kopteva2019error,ren2021sharp}
\begin{equation}\label{24-5-1}
    \tau_{n}\leq r~ T N^{-r}n^{r-1}\leq r~T^{1/r}N^{-1}t_{n}^{1-1/r},~n=1,2,\dots,N,
\end{equation} 
and 
\begin{equation}\label{24-5-2}
    t_{n}\leq 2^{r}~t_{n-1},~n=2,3,\dots,N,
\end{equation}
and there exists a positive  constant $\gamma$  independent of step sizes such that 
\begin{equation}\label{24-5-3}
 \tau_{n-1} <   \tau_{n} < \gamma ~\tau_{n-1},~n=2,\dots,N.
\end{equation}
Let us denote $u^{n}=u(t_{n})~\text{ and }~  u^{n-\sigma}= u(t_{n-\sigma}),$ where $t_{n-\sigma}=(1-\sigma)~t_{n}+\sigma ~t_{n-1}$, for $0\leq \sigma \leq 1$.\\

\noindent In Section \ref{8-7-2a}, we derive the local truncation errors of the approximations of the Caputo fractional derivative, the nonlinear diffusion coefficient, and  the memory term. Based on these approximations we devise a fully discrete numerical scheme \eqref{27-5-9a}-\eqref{27-5-9b} and derive a priori bounds on its solutions in Section \ref{8-7-3}. With an application of Br\"{o}uwer fixed point theorem,  we establish the well-posedness of the developed fully discrete numerical scheme in  Section \ref{8-7-4}. In Section \ref{8-7-5}, we derive the global rate of convergence of the fully discrete numerical scheme.

\subsection{Local truncation errors}\label{8-7-2a}
\noindent \textbf{ Approximation of Caputo fractional derivative: }
In this scheme \cite{alikhanov2015new}, Caputo fractional derivative is evaluated at $t_{n-\sigma}~(n\geq 1)$ as follows 
\begin{equation*}\label{SA1}
\begin{aligned}
    ^{C}\mathcal{D}^{\alpha}_{t_{n-\sigma}}u&=\frac{1}{\Gamma(1-\alpha)}\int_{0}^{t_{n-\sigma}}(t_{n-\sigma}-s)^{-\alpha}u'(s)~ds\\
    &=\frac{1}{\Gamma(1-\alpha)}\sum_{j=1}^{n-1}\int_{t_{j-1}}^{t_{j}}(t_{n-\sigma}-s)^{-\alpha}u'(s)~ds\\
    &+\frac{1}{\Gamma(1-\alpha)}\int_{t_{n-1}}^{t_{n-\sigma}}(t_{n-\sigma}-s)^{-\alpha}u'(s)~ds.
    \end{aligned}
\end{equation*}
For the approximation of Caputo fractional derivative at $t_{n-\sigma}$, we use the quadratic interpolation $\mathcal{P}_{2,j}u(t)$ of $u(t)$ in the interval $[t_{j-1},t_{j}]~(1\leq j \leq n-1)$ and the linear interpolation $\mathcal{P}_{1,n}u(t)$ of $u(t)$ in the interval $[t_{n-1},t_{n}].$\\

\noindent Let $~^{\sigma}\mathbb{D}^{\alpha}_{t}u^{n}$ be the L2-1$_\sigma$ approximation of the Caputo fractional derivative at $t_{n-\sigma}$ and $\mathcal{T}_{u}^{n-\sigma}$ be the local truncation error associated with $u$ at $t_{n-\sigma}$. Then $~^{\sigma}\mathbb{D}^{\alpha}_{t}u^{n}~(1\leq n \leq N)$ is given by 
\begin{equation}\label{SA6}
\begin{aligned}
    ~^{\sigma}\mathbb{D}^{\alpha}_{t}u^{n}=
    c_{n,n}u^{n}+\sum_{j=1}^{n-1}\left(c_{n,j}-c_{n,j+1}\right)u^{j}-c_{n,1}u^{0},
    \end{aligned}
\end{equation}
where $c_{1,1}=\tau_{1}^{-1}a_{1,1}$ for $n=1$ and for $n\geq 2$
\begin{equation*}
    c_{n,j}=\begin{cases}\tau_{j}^{-1}\left(a_{n,j}-b_{n,j}\right),& \text{for}~~ j=1\\
    \tau_{j}^{-1}\left(a_{n,j}-b_{n,j}+b_{n,j-1}\right),& \text{for}~~ 2\leq j \leq n-1\\
    \tau_{j}^{-1}\left(a_{n,j}+b_{n,j-1}\right), & \text{for} ~~j=n.
    \end{cases}
\end{equation*}
with

\begin{equation*}
    a_{n,j}=\frac{1}{\Gamma(1-\alpha)}\int_{t_{j-1}}^{t_{j}}(t_{n-\sigma}-s)^{-\alpha}~ds,~\text{for}~j=1,2,\dots,n-1,
\end{equation*}
and 
\begin{equation*}
    a_{n,n}=\frac{1}{\Gamma(1-\alpha)}\int_{t_{n-1}}^{t_{n-\sigma}}(t_{n-\sigma }-s)^{-\alpha}~ds=\frac{(1-\sigma)^{1-\alpha}}{\Gamma(2-\alpha)}\tau_{n}^{1-\alpha},
    \end{equation*}
    and
    \begin{equation*}
b_{n,j}=\frac{2}{t_{j+1}-t_{j-1}}\frac{1}{\Gamma(1-\alpha)}\int_{t_{j-1}}^{t_{j}}(t_{n-\sigma}-s)^{-\alpha}\left(s-t_{j-\frac{1}{2}}\right)~ds,~\text{for}~j=1,2,\dots,n-1.
\end{equation*}

\noindent The local truncation error $\mathcal{T}_{u}^{n-\sigma}$ is given by  $\mathcal{T}^{n-\sigma}_{u}=~^{C}\mathcal{D}^{\alpha}_{t_{n-\sigma}}u-~^{\sigma}\mathbb{D}^{\alpha}_{t}u^{n}$.\\

\noindent Using the idea of the proof of the Lemma 3 \cite{chen2019error}, one can  prove the following properties of the  weights $c_{n,j}~(1\leq j \leq n)$
\begin{equation}\label{8-7-1a}
    c_{n,j+1}> c_{n,j}>0~\text{for}~j=1,2,\dots,n-1,
\end{equation}
and   
\begin{equation}\label{8-7-1}
     c_{n,n}\leq \tau_{n}^{-\alpha}\left[\frac{(1-\sigma)^{1-\alpha}}{\Gamma(2-\alpha)}+\frac{\alpha}{6\Gamma(1-\alpha)}\frac{1}{(1-\sigma)^{1+\alpha}}\right],
\end{equation}
and
\begin{equation}\label{8-7-2}
    c_{n,n} \geq \tau_{n}^{-\alpha} \left[\frac{(1-\sigma)^{1-\alpha}}{\Gamma(2-\alpha)}+\frac{\alpha}{6\Gamma(1-\alpha)}\frac{1}{(1+\gamma)}\frac{1}{((1-\sigma)+\gamma)^{1+\alpha}}\right].
\end{equation}

\noindent The approximation  $~^{\sigma}\mathbb{D}^{\alpha}_{t}u^{n}$ of Caputo fractional derivative satisfies the following positivity property (\cite{chen2019error},~Lemma 8) 
\begin{equation}\label{22-5-1a}
    \left(~^{\sigma}\mathbb{D}^{\alpha}_{t}u^{n},u^{n}\right)\geq \frac{1}{2}~^{\sigma}\mathbb{D}^{\alpha}_{t}\|u^{n}\|^{2}. 
\end{equation}
\begin{lem} \label{25-5-1} (\cite{chen2019error}, Lemma 7) Suppose that $u \in C[0,T]\cap C^{3}(0,T]$  and satisfies the regularity assumption \eqref{28-6-1}. Then the local truncation error $\mathcal{T}_{u}^{n-\sigma}$ satisfies 
\begin{equation}\label{25-5-2}
    \left|\mathcal{T}_{u}^{n-\sigma}\right|\lesssim t_{n-\sigma}^{-\alpha}~N^{-\min\left\{r\alpha,~ 3-\alpha\right\}},~\text{for}~n=1,2,\dots,N.
\end{equation}
\end{lem}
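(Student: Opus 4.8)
The plan is to realize $\mathcal{T}_u^{n-\sigma}$ as the error committed when the exact integrand $u'$ in $^{C}\mathcal{D}^{\alpha}_{t_{n-\sigma}}u$ is replaced, interval by interval, by the derivative of the interpolant used to build the weights $c_{n,j}$. Since the coefficients $a_{n,j},b_{n,j}$ are precisely those produced by integrating $(t_{n-\sigma}-s)^{-\alpha}(\mathcal{P}_{2,j}u)'(s)$ on each interior interval $[t_{j-1},t_j]$ and $(t_{n-\sigma}-s)^{-\alpha}(\mathcal{P}_{1,n}u)'(s)$ on $[t_{n-1},t_{n-\sigma}]$, subtracting \eqref{SA6} from the split form of the Caputo derivative gives
\[
\mathcal{T}_u^{n-\sigma}=\frac{1}{\Gamma(1-\alpha)}\Bigg[\sum_{j=1}^{n-1}\int_{t_{j-1}}^{t_j}(t_{n-\sigma}-s)^{-\alpha}\bigl(u'(s)-(\mathcal{P}_{2,j}u)'(s)\bigr)\,ds+\int_{t_{n-1}}^{t_{n-\sigma}}(t_{n-\sigma}-s)^{-\alpha}\bigl(u'(s)-(\mathcal{P}_{1,n}u)'(s)\bigr)\,ds\Bigg].
\]
Thus the whole estimate reduces to bounding weighted interpolation-derivative errors, and the strategy is to isolate the interval $[t_0,t_1]$ nearest the singularity at $t=0$ from the remaining intervals.

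For the remaining interior intervals ($2\le j\le n-1$) I would use the standard quadratic interpolation bound $|u'(s)-(\mathcal{P}_{2,j}u)'(s)|\lesssim\tau_j^{2}\max_{[t_{j-1},t_{j+1}]}|u'''|$, insert the temporal regularity $|u'''(t)|\lesssim t^{\alpha-3}$ from \eqref{28-6-1} (where $t\sim t_j$ on this interval by \eqref{24-5-2}), and combine with the grading bound $\tau_j\lesssim N^{-1}t_j^{1-1/r}$ from \eqref{24-5-1}. The last subinterval is treated with the linear interpolation estimate $|u'(s)-(\mathcal{P}_{1,n}u)'(s)|\lesssim\tau_n\max_{[t_{n-1},t_n]}|u''|$ together with $|u''(t)|\lesssim t^{\alpha-2}$ and $\int_{t_{n-1}}^{t_{n-\sigma}}(t_{n-\sigma}-s)^{-\alpha}\,ds\sim\tau_n^{1-\alpha}$. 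After substituting the mesh bounds, these two families of terms combine to a contribution of order $t_{n-\sigma}^{-\alpha}N^{-(3-\alpha)}$, which is the ``smooth-region'' branch of the estimate.

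For the first interval $j=1$ the naive quadratic bound fails, because $u'''(t)\sim t^{\alpha-3}$ is not integrable near $0$. Here I would instead estimate the term directly, bounding $|u'(s)-(\mathcal{P}_{2,1}u)'(s)|$ by $|u'(s)|+|(\mathcal{P}_{2,1}u)'(s)|$ and using the integrable bound $|u'(s)|\lesssim s^{\alpha-1}$ together with $t_1=TN^{-r}$; integrating $(t_{n-\sigma}-s)^{-\alpha}$ over $[t_0,t_1]$ and extracting the prefactor $t_{n-\sigma}^{-\alpha}$ then produces a contribution of order $t_{n-\sigma}^{-\alpha}N^{-r\alpha}$. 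Taking the larger of the two exponents $3-\alpha$ and $r\alpha$ yields the claimed bound $|\mathcal{T}_u^{n-\sigma}|\lesssim t_{n-\sigma}^{-\alpha}N^{-\min\{r\alpha,\,3-\alpha\}}$.

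The hard part will be the summation over $2\le j\le n-1$: one must control the weighted sum $\sum_j \tau_j^{2}\,t_j^{\alpha-3}\int_{t_{j-1}}^{t_j}(t_{n-\sigma}-s)^{-\alpha}\,ds$ uniformly in $n$, simultaneously tracking the singular kernel $(t_{n-\sigma}-s)^{-\alpha}$ (largest when $s\to t_{n-\sigma}$), the growth of the step sizes $\tau_j$ controlled by \eqref{24-5-1}, and the blow-up of $u'''$ near $t=0$ from \eqref{28-6-1}; extracting exactly the factor $t_{n-\sigma}^{-\alpha}$ from this sum is where the grading relations \eqref{24-5-1}--\eqref{24-5-3} are indispensable. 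The second nonroutine point is the first interval, whose singular behaviour cannot be handled by polynomial interpolation error and which is precisely what forces the $r\alpha$ branch of the minimum (and hence makes $r\geq(3-\alpha)/\alpha$ the optimal choice).
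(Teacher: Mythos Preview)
The paper does not actually prove this lemma: it is stated with a citation to \cite{chen2019error}, Lemma~7, and no argument is given in the present paper. So there is no ``paper's own proof'' to compare against; your sketch is essentially the standard decomposition used in the cited reference.

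That said, your outline has one genuine gap. For the final subinterval $[t_{n-1},t_{n-\sigma}]$ you invoke the crude bound $|u'(s)-(\mathcal{P}_{1,n}u)'(s)|\lesssim\tau_n\max|u''|$ together with $\int_{t_{n-1}}^{t_{n-\sigma}}(t_{n-\sigma}-s)^{-\alpha}\,ds\sim\tau_n^{1-\alpha}$, which yields only a contribution of order $\tau_n^{2-\alpha}t_n^{\alpha-2}$; after inserting \eqref{24-5-1} this is $N^{-(2-\alpha)}t_n^{-(2-\alpha)/r}$, which does \emph{not} reduce to $t_{n-\sigma}^{-\alpha}N^{-(3-\alpha)}$ in general. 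The L2-1$_\sigma$ scheme attains the extra power of $\tau_n$ on this last interval only because $\sigma$ is chosen so that the leading (linear-in-$s$) term of $u'(s)-(\mathcal{P}_{1,n}u)'(s)$ integrates to zero against the kernel $(t_{n-\sigma}-s)^{-\alpha}$ on $[t_{n-1},t_{n-\sigma}]$; this cancellation is what upgrades $\tau_n^{2-\alpha}$ to $\tau_n^{3-\alpha}$ and must be invoked explicitly. Apart from this point (and the minor verbal slip ``larger of the two exponents'' where you mean the smaller, since you correctly write $N^{-\min\{r\alpha,3-\alpha\}}$), the decomposition into the singular first interval versus the smooth region, and the identification of which branch of the minimum each produces, are exactly right.
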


\noindent \textbf{Approximation of nonlinear diffusion coefficient: } Due to the presence of nonlocal nonlinear diffusion coefficient $(1+\|\nabla u\|^{2})$ we obtain a nonlinear system of algebraic equations with non sparse Jacobian \cite{gudi2012finite}. To solve this nonlinear system the Newton-Raphson method is applied, but for that  we need huge amount of computational cost and computer storage. We reduce these costs by developing the following linearized approximations  of diffusion coefficient at $t_{n-\sigma}$
\begin{equation}\label{25-5-3}
    \tilde{u}^{n-1,\sigma}=u^{n-1}+(1-\sigma)\left(\frac{\tau_{n}}{\tau_{n-1}}\right)\left(u^{n-1}-u^{n-2}\right),~\text{for}~n\geq 2,
\end{equation}
and 
\begin{equation}\label{25-5-4}
    u^{n,\sigma}=(1-\sigma)~u^{n}+\sigma~u^{n-1},~\text{for}~n\geq 1.
\end{equation}
\begin{rmk}
If we take uniform mesh $(\tau_{n}=\tau_{n-1})$ and $\sigma =\frac{\alpha}{2}$ in \eqref{25-5-3} and \eqref{25-5-4} then we get linearized fractional Crank-Nicolson scheme which we have developed in \cite{lalit}.
\end{rmk}

\begin{lem}\label{25-5-5} Suppose that $u\in C[0,T]\cap C^{2}(0,T]$ which satisfies regularity assumption \eqref{28-6-1}. Then linearization errors $\tilde{\mathcal{L}}_{u}^{n-\sigma}=\tilde{u}^{n-1,\sigma}-u^{n-\sigma}$ and $\mathcal{L}_{u}^{n-\sigma}=u^{n,\sigma}-u^{n-\sigma}$ satisfy
\begin{equation}\label{25-5-6}
    \left|\tilde{\mathcal{L}}_{u}^{n-\sigma}\right|\lesssim  N^{-\min\{r\alpha,2\}},~~ \text{for}~n\geq 2,
\end{equation}
and 
\begin{equation}\label{25-5-7}
    \left|\mathcal{L}_{u}^{n-\sigma}\right| \lesssim  N^{-\min\{r\alpha,2\}},~~ \text{for}~n\geq 1.
\end{equation}
\end{lem}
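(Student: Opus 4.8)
The plan is to recognize both linearized quantities as values of low-degree interpolating polynomials of $u$ in time and then invoke the corresponding remainder formulas together with the regularity \eqref{28-6-1} and the graded-mesh properties \eqref{24-5-1}--\eqref{24-5-3}. First I would note that, since $t_n-t_{n-\sigma}=\sigma\tau_n$ and $t_{n-\sigma}-t_{n-1}=(1-\sigma)\tau_n$, the quantity $u^{n,\sigma}$ in \eqref{25-5-4} is exactly the linear interpolant of $u$ through the nodes $t_{n-1},t_n$ evaluated at $t_{n-\sigma}$, while $\tilde u^{n-1,\sigma}$ in \eqref{25-5-3} is the linear \emph{extrapolant} of $u$ through $t_{n-2},t_{n-1}$ evaluated at the point $t_{n-\sigma}\in(t_{n-1},t_n)$, which lies to the right of the interpolation interval. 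This reduces $\mathcal{L}_u^{n-\sigma}$ and $\tilde{\mathcal{L}}_u^{n-\sigma}$ to interpolation/extrapolation errors governed by $\partial_t^2 u$.

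For the generic interior steps I would use the (Peano/Lagrange) remainder $\tfrac12\partial_t^2u(\xi)$ multiplied by the product of node factors; the integral form of the remainder makes the passage to the norms $\|\cdot\|_p$ clean. For $\mathcal{L}_u^{n-\sigma}$ with $n\ge2$ this product is $\sigma(1-\sigma)\tau_n^2\lesssim\tau_n^2$ with $\xi\in(t_{n-1},t_n)$, and for $\tilde{\mathcal{L}}_u^{n-\sigma}$ with $n\ge3$ it is $(1-\sigma)\tau_n\bigl[(1-\sigma)\tau_n+\tau_{n-1}\bigr]\lesssim\tau_n^2$ after using $\tau_{n-1}<\tau_n$ from \eqref{24-5-3}, with $\xi\in(t_{n-2},t_n)$. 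By \eqref{28-6-1} one has $\|\partial_t^2u(\xi)\|_p\lesssim\xi^{\alpha-2}\lesssim t_{n-1}^{\alpha-2}$ (resp.\ $t_{n-2}^{\alpha-2}$), and the quasi-uniformity \eqref{24-5-2} upgrades this to $\lesssim t_n^{\alpha-2}$. Inserting $\tau_n\lesssim N^{-1}t_n^{1-1/r}$ from \eqref{24-5-1} yields the uniform bound $\tau_n^2 t_n^{\alpha-2}\lesssim N^{-2}t_n^{\alpha-2/r}$. I would then split into regimes: if $r\alpha\ge2$ then $\alpha-2/r\ge0$ and $t_n\le T$ give $N^{-2}$; if $r\alpha<2$ the exponent is negative and $t_n\gtrsim N^{-r}$ gives $t_n^{\alpha-2/r}\lesssim N^{2-r\alpha}$, hence $N^{-2}\cdot N^{2-r\alpha}=N^{-r\alpha}$. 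Both regimes give $\lesssim N^{-\min\{r\alpha,2\}}$, which is \eqref{25-5-6}--\eqref{25-5-7} on the interior.

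The delicate part is the startup steps $n=1$ for $\mathcal{L}_u^{n-\sigma}$ and $n=2$ for $\tilde{\mathcal{L}}_u^{n-\sigma}$, whose interpolation intervals abut $t=0$, where $\partial_t^2u$ is not integrable under \eqref{28-6-1}. Here I would avoid the second-derivative remainder and instead write the relevant nodal differences as integrals of $\partial_t u$; since $\|\partial_t u(s)\|_p\lesssim s^{\alpha-1}$ is integrable at $0$, each of $\|u^1-u^0\|_p$, $\|u(t_{1-\sigma})-u^0\|_p$ and $\|u(t_{2-\sigma})-u^1\|_p$ is controlled by $\int_0^{t_2}s^{\alpha-1}\,ds\lesssim t_2^{\alpha}\lesssim N^{-r\alpha}$. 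Using that $\tau_2/\tau_1=2^r-1$ is a mesh-dependent constant, both startup errors are $\lesssim t_1^{\alpha}+t_2^{\alpha}\lesssim N^{-r\alpha}\le N^{-\min\{r\alpha,2\}}$, matching the interior bound.

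The main obstacle I anticipate is the correct bookkeeping of the extrapolation error \eqref{25-5-3}: because $t_{n-\sigma}$ lies outside $[t_{n-2},t_{n-1}]$, one must use the extrapolation remainder with $\xi$ ranging over the enlarged interval $(t_{n-2},t_n)$ and control the factor $t_{n-\sigma}-t_{n-2}=(1-\sigma)\tau_n+\tau_{n-1}$ through \eqref{24-5-3}, rather than a simpler interpolation estimate. The second delicate point is making the transition between the regimes $r\alpha\gtrless2$ uniform in $n$, which relies on the lower bound $t_n\gtrsim N^{-r}$ together with the quasi-uniformity \eqref{24-5-2}; once these ingredients are set, the remaining estimates are routine.
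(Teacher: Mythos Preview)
Your proposal is correct and, for the generic steps, follows the same route as the paper: the paper also obtains the remainder
\[
\tilde{\mathcal{L}}_u^{n-\sigma}=\Bigl[1+(1-\sigma)\tfrac{\tau_n}{\tau_{n-1}}\Bigr](1-\sigma)\tau_n\tau_{n-1}\,u_{tt}(\xi),\qquad \xi\in(t_{n-2},t_{n-\sigma}),
\]
via Taylor expansion (which is exactly your linear-extrapolation Lagrange remainder), then uses \eqref{28-6-1}, \eqref{24-5-1}, \eqref{24-5-2} to arrive at $\tau_n^2\,t_{n-2}^{\alpha-2}\lesssim n^{r\alpha-2}N^{-r\alpha}\lesssim N^{-\min\{r\alpha,2\}}$; your case split $r\alpha\gtrless 2$ on $N^{-2}t_n^{\alpha-2/r}$ is the same computation in different clothing. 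The one substantive difference is your separate treatment of the startup steps $n=1$ (for $\mathcal{L}$) and $n=2$ (for $\tilde{\mathcal{L}}$) via integrals of $\partial_t u$: the paper applies the second-derivative bound uniformly and simply writes $|u_{tt}(\xi)|\lesssim t_{n-2}^{\alpha-2}$, which at $n=2$ reads $t_0^{\alpha-2}$ and is not usable, so your first-derivative argument there actually fills a gap the paper glosses over. What the paper's version buys is brevity in the interior; what yours buys is a proof that genuinely covers all $n$ in the stated range.
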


\begin{proof} By the  Taylor series expansion of $u^{n-1}$ and $u^{n-2}$ around $ u^{n-\sigma}$ there exist  points $\xi_{1}\in (t_{n-1},t_{n-\sigma})$ and $\xi_{2}\in (t_{n-2},t_{n-\sigma})$ such that 
\begin{equation*}\label{27-5-1}
    \tilde{\mathcal{L}}_{u}^{n-\sigma}=\frac{1}{2}(t_{n-1}-t_{n-\sigma})(t_{n-2}-t_{n-\sigma})^{2}u_{tt}(\xi_{1})-\frac{1}{2}(t_{n-2}-t_{n-\sigma})(t_{n-1}-t_{n-\sigma})^{2}u_{tt}(\xi_{2}).
\end{equation*}
Using intermediate value theorem (\cite{baskar2016introduction}, Theorem 6.14) for $u_{tt}$ we obtain a point $\xi \in (t_{n-2},t_{n-\sigma})$ such that 
\begin{equation*}\label{27-5-2}
    \tilde{\mathcal{L}}_{u}^{n-\sigma}=\left[1+(1-\sigma)\frac{\tau_{n}}{\tau_{n-1}}\right](1-\sigma)\tau_{n}\tau_{n-1}u_{tt}(\xi).
\end{equation*}
In the view of estimates \eqref{24-5-3} and $|u_{tt}(\xi)|\lesssim \xi^{\alpha-2}\lesssim t_{n-2}^{\alpha-2}$ we get 
\begin{equation*}\label{27-5-3}
    \left|\tilde{\mathcal{L}}_{u}^{n-\sigma}\right|\lesssim \tau_{n}^{2}~t_{n-2}^{\alpha-2}.
\end{equation*}
Using \eqref{24-5-2} and \eqref{24-5-1} we obtain 
\begin{equation*}\label{27-5-4}
    \left|\tilde{\mathcal{L}}_{u}^{n-\sigma}\right|\lesssim n^{r\alpha-2}N^{-r\alpha} \lesssim N^{-\min\{r\alpha,2\}}.
\end{equation*}
Similarly we can prove the estimate \eqref{25-5-7}.
\end{proof}

\noindent \textbf{Approximation of memory term:} 
For $n=1$, we apply right rectangle  rule on the interval $[0,t_{1-\sigma}]$ and define the quadrature error operator $\left(\mathcal{Q}^{1-\sigma}_{u},v\right)$ as follows
\begin{equation}\label{27-5-5}
    \left(\mathcal{Q}_{u}^{1-\sigma},v\right)=t_{1-\sigma}(\nabla u^{1-\sigma}, \nabla v)-\int_{0}^{t_{1-\sigma}}(\nabla u(s),\nabla v)~ds~\forall~v\in H^{1}_{0}(\Omega).
\end{equation}
For $n\geq 2$, we apply the modified trapezoidal rule in which we divide the interval $[t_{1},t_{n-\sigma}]$ into two parts $[t_{1},t_{n-1}]$ and $[t_{n-1},t_{n-\sigma}]$. In the first part we apply composite trapezoidal  rule and in the second part left rectangle rule is used. Quadrature error operator $\left(\mathcal{Q}_{u}^{n-\sigma},v\right),~n\geq 2$ is defined as follows
\begin{equation}\label{27-5-6}
\begin{aligned}
     \left(\mathcal{Q}^{n-\sigma}_{u},v\right)&=\sum_{j=2}^{n-1}\frac{\tau_{j}}{2}\left(\nabla u^{j-1}+\nabla u^{j},\nabla v\right)-\int_{t_{1}}^{t_{n-1}}(\nabla u(s),v)~ds\\
     &+(1-\sigma)\tau_{n}(\nabla u^{n-1},\nabla v)-\int_{t_{n-1}}^{t_{n-\sigma}}(\nabla u(s),\nabla v)~ds~\forall~v\in H^{1}_{0}(\Omega).
     \end{aligned}
\end{equation}
\begin{lem}  \label{27-5-7} Suppose that $u\in C[0,T]\cap C^{2}(0,T]$ which satisfies the regularity assumption \eqref{28-6-1}. Then the quadrature errors $\mathcal{Q}_{u}^{1-\sigma}$ and $\mathcal{Q}_{u}^{n-\sigma}~(n\geq 2)$ satisfy 
\begin{align}
    \left(\mathcal{Q}_{u}^{1-\sigma},v\right)&\lesssim N^{-r(1+\alpha)}\|\nabla v\| ~\forall~ v~\in~H^{1}_{0}(\Omega),\label{27-5-8}\\
    \left(\mathcal{Q}_{u}^{n-\sigma},v\right)&\lesssim N^{-\min\{r(1+\alpha),2\}}\|\nabla v\|~\forall~ v~\in~H^{1}_{0}(\Omega),\label{27-5-9}
\end{align}

\end{lem}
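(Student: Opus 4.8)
The plan is to treat the three elementary quadrature rules appearing in \eqref{27-5-5} and \eqref{27-5-6} separately, to convert each local error into an integral of a derivative of the scalar function $g(s):=(\nabla u(s),\nabla v)$, and then to absorb the temporal singularity of $\partial_t^{l}u$ at $t=0$ using the graded-mesh relations \eqref{24-5-1}--\eqref{24-5-2} and the regularity assumption \eqref{28-6-1}. Throughout, Cauchy--Schwarz and \eqref{28-6-1} give $|g'(\tau)|\lesssim\|\partial_t u(\tau)\|_1\,\|\nabla v\|\lesssim\tau^{\alpha-1}\|\nabla v\|$ and $|g''(\tau)|\lesssim\|\partial_{tt}u(\tau)\|_1\,\|\nabla v\|\lesssim\tau^{\alpha-2}\|\nabla v\|$, the exponents being integrable where needed.

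For the case $n=1$ of \eqref{27-5-8}, I would rewrite the right-rectangle error \eqref{27-5-5}, after inserting $g(t_{1-\sigma})-g(s)=\int_s^{t_{1-\sigma}}g'(\tau)\,d\tau$ and applying Fubini, as
\[
\left(\mathcal{Q}_u^{1-\sigma},v\right)=\int_0^{t_{1-\sigma}}\bigl(g(t_{1-\sigma})-g(s)\bigr)\,ds=\int_0^{t_{1-\sigma}}\tau\,g'(\tau)\,d\tau .
\]
The bound $|g'(\tau)|\lesssim\tau^{\alpha-1}\|\nabla v\|$ makes the integrand integrable at $\tau=0$, and $\int_0^{t_{1-\sigma}}\tau^{\alpha}\,d\tau\lesssim t_{1-\sigma}^{1+\alpha}$. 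Since $t_{1-\sigma}=(1-\sigma)t_1\leq t_1=T N^{-r}$, this gives $t_{1-\sigma}^{1+\alpha}\lesssim N^{-r(1+\alpha)}$ and hence \eqref{27-5-8}.

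For $n\geq2$ in \eqref{27-5-9} I would split $\left(\mathcal{Q}_u^{n-\sigma},v\right)$ into the composite-trapezoidal error on $[t_1,t_{n-1}]$ and the left-rectangle error on $[t_{n-1},t_{n-\sigma}]$. On each subinterval $[t_{j-1},t_j]$ with $j\geq2$ one has $g\in C^2$ because $u\in C^2(0,T]$ and $t_1>0$ keeps us away from the singularity; the classical trapezoidal remainder gives $\frac{\tau_j}{2}(g(t_{j-1})+g(t_j))-\int_{t_{j-1}}^{t_j}g(s)\,ds=\frac{\tau_j^3}{12}g''(\eta_j)$ for some $\eta_j\in(t_{j-1},t_j)$. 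Using $|g''(\eta_j)|\lesssim t_{j-1}^{\alpha-2}\|\nabla v\|\lesssim t_j^{\alpha-2}\|\nabla v\|$ (by $t_{j-1}\gtrsim t_j$ from \eqref{24-5-2}) together with $\tau_j\lesssim T N^{-r}j^{r-1}$ and $t_j=T(j/N)^r$, the summed trapezoidal error reduces to
\[
\sum_{j=2}^{n-1}\tau_j^3\,t_j^{\alpha-2}\;\lesssim\; N^{-r(1+\alpha)}\sum_{j=2}^{n-1}j^{\,r(1+\alpha)-3}\,\|\nabla v\|.
\]
For the left-rectangle term, noting $t_{n-\sigma}-t_{n-1}=(1-\sigma)\tau_n$ and $|g'(\tau)|\lesssim t_{n-1}^{\alpha-1}\|\nabla v\|$ on this short interval, the same device yields $\lesssim\tau_n^2\,t_{n-1}^{\alpha-1}\|\nabla v\|\lesssim N^{-2}\,t_n^{\,1+\alpha-2/r}\|\nabla v\|$.

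The final step, and the main obstacle, is the case analysis on the exponent $r(1+\alpha)$ against $2$. If $r(1+\alpha)<2$ the series $\sum_j j^{\,r(1+\alpha)-3}$ converges, so the trapezoidal contribution is $O(N^{-r(1+\alpha)})$; if $r(1+\alpha)>2$ the partial sum grows like $n^{\,r(1+\alpha)-2}\leq N^{\,r(1+\alpha)-2}$, giving $O(N^{-2})$; the borderline $r(1+\alpha)=2$ produces only a harmless logarithmic factor absorbed into the generic constant. The identical dichotomy applies to the left-rectangle factor $t_n^{\,1+\alpha-2/r}=T^{\,1+\alpha-2/r}(n/N)^{r(1+\alpha)-2}$, which is bounded by $1$ when $r(1+\alpha)\geq2$ and by $N^{\,2-r(1+\alpha)}$ (via $n\geq2$) otherwise. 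In both pieces this produces the bound $O\!\left(N^{-\min\{r(1+\alpha),2\}}\right)$, and adding the two contributions establishes \eqref{27-5-9}. The delicate point to get right is precisely the interplay between the strength of the singularity (the exponent $\alpha$) and the grading $r$, which is what forces the two competing rates $r(1+\alpha)$ and $2$ and motivates the later choice of a sufficiently large grading exponent.
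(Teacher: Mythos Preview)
Your proposal is correct and follows essentially the same route as the paper: bound the right-rectangle, composite-trapezoidal, and left-rectangle pieces separately via derivatives of $g(s)=(\nabla u(s),\nabla v)$, invoke the regularity bounds $|g'|\lesssim s^{\alpha-1}\|\nabla v\|$, $|g''|\lesssim s^{\alpha-2}\|\nabla v\|$, and then convert everything to powers of $N$ using the graded-mesh relations before doing the case split on $r(1+\alpha)$ versus $2$. The only cosmetic differences are that you use Fubini where the paper uses the mean-value theorem for the $n=1$ piece, and you sum $\sum_{j}j^{r(1+\alpha)-3}$ directly whereas the paper splits this sum at $j=\lfloor n/2\rfloor$ and treats the two halves separately; your direct summation is a legitimate simplification that yields the same bound.
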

\begin{proof}
Let us denote $\bar{u}(s)=(\nabla u(s),\nabla v)$ then $\left(\mathcal{Q}^{1-\sigma}_{u},v\right)$ is rewritten as 
\begin{equation*}
    \left(\mathcal{Q}_{u}^{1-\sigma},v\right)=t_{1-\sigma}\bar{u}(t_{1-\sigma})-\int_{0}^{t_{1-\sigma}}\bar{ u}(s)~ds.
\end{equation*}
 For $s \in (0,t_{1-\sigma})$, the function $\bar{u}(s)$ is continuous on $[s,t_{1-\sigma}]$ and differentiable on $(s,t_{1-\sigma})$ then there exists a point $\xi_{1} \in (s,t_{1-\sigma}$) (thanks to the mean value theorem) such that 
\begin{equation*}\label{9-7-1}
\begin{aligned}
    \bar{u}(t_{1-\sigma})-\bar{u}(s)=(t_{1-\sigma}-s)\bar{u}'({\xi_{1}}).
    \end{aligned}
\end{equation*}
Apply regularity assumption \eqref{28-6-1} and Cauchy-Schwarz inequality  to get
\begin{equation*}
\begin{aligned}
    \bar{u}(t_{1-\sigma})-\bar{u}(s)\lesssim (t_{1}-s)\xi_{1}^{\alpha-1}\|\nabla v\|\lesssim (t_{1}-s)s^{\alpha-1}\|\nabla v\|.
    \end{aligned}
\end{equation*}
Integrating both sides over $(t_{0},t_{1-\sigma})$ we obtain 
\begin{equation*}
\begin{aligned}
    \left(\mathcal{Q}_{u}^{1-\sigma},v\right)&\lesssim\|\nabla v\| \int_{t_{0}}^{t_{1}}(t_{1}-s)s^{\alpha-1}ds \lesssim \|\nabla v\|t_{1}^{1+\alpha}\lesssim N^{-r(1+\alpha)}\|\nabla v\|.
    \end{aligned}
\end{equation*}
 Now we estimate $\left(\mathcal{Q}_{u}^{n-\sigma},v\right)~(n\geq 2)$, consider
\begin{equation*}
        \begin{aligned}
    \left(\mathcal{Q}^{n-\sigma}_{u},v\right)&=\sum_{j=1}^{n-2}\int_{t_{j}}^{t_{j+1}}\left(s-t_{j}\right)(s-t_{j+1})\frac{\partial^{2}\bar{u}}{\partial s^{2}}(\eta_{j}(s))~ds\\
    &+\int_{t_{n-1}}^{t_{n-\sigma}}\left(s-t_{n-1}\right)\frac{\partial \bar{u}}{\partial s}(\xi(s))~ds\\
    &~\text{for some}~ \eta_{j}(s)\in \left(t_{j},t_{j+1}\right), \xi(s) \in \left(t_{n-1},t_{n-\sigma}\right).
    \end{aligned}
    \end{equation*}
    By mean value theorem for integral (\cite{burden2015numerical},~Theorem 1.13), there exists $\eta_{j}\in (t_{j},t_{j+1})$ and $\xi \in (t_{n-1},t_{n-\sigma})$ such that 
    \begin{equation}\label{S12}
        \begin{aligned}
    \left(\mathcal{Q}_{u}^{n-\sigma},v\right)&\leq \sum_{j=1}^{n-2}\max_{\eta_{j}\in \left(t_{j},t_{j+1}\right)}\left|\frac{\partial^{2}\bar{u}}{\partial s^{2}}(\eta_{j})\right|\frac{\tau_{j+1}^{3}}{6}+\max_{\xi\in \left(t_{n-1},t_{n-\sigma}\right)}\left|\frac{\partial \bar{u}}{\partial s}(\xi)\right|\frac{\tau_{n}^{2}}{2},
    \end{aligned}
    \end{equation}
 Using regularity assumption \eqref{28-6-1} and Cauchy-Schwarz inequality  we obtain
    \begin{equation}\label{S15}
        \begin{aligned}
    \left(\mathcal{Q}_{u}^{n-\sigma},v\right)
    &\lesssim \sum_{j=1}^{n-2}\tau_{j+1}^{3}t_{j}^{\alpha-2}\|\nabla v\|+\tau_{n}^{2}t_{n-1}^{\alpha-1}\|\nabla v\|.
    \end{aligned}
    \end{equation}
    Definition of $t_{j}$ and estimate \eqref{24-5-1} imply
    \begin{equation*}
    \begin{aligned}
        \tau_{j+1}^{3}t_{j}^{\alpha-2}\lesssim  j^{r(\alpha+1)-3}N^{-r(\alpha+1)}.
        \end{aligned}
    \end{equation*}
    Now using the well-known convergence results \cite{stynes2017error} for the series of the type  we have 
\begin{equation}\label{1a}
    \sum_{j=1}^{\left[\frac{n}{2}\right]-1}j^{r(\alpha+1)-3}n^{-r(\alpha+1)}\leq \begin{cases} n^{-r(\alpha+1)}&~ \text{if}~r(\alpha+1)<2,\\
    n^{-2}\ln n &~\text{if}~r(\alpha+1)=2,\\
    n^{-2}&~\text{if}~r(\alpha+1)>2.
    \end{cases}
\end{equation} 
Further $\left[\frac{n}{2}\right]\leq j \leq n-2$, we have 
\begin{equation*}
    \left|\sum_{j=\left[\frac{n}{2}\right]}^{n-2}\int_{t_{j}}^{t_{j+1}}\left(s-t_{j}\right)\left(s-t_{j+1}\right)\frac{\partial^{2}\bar{u}}{\partial s^{2}}(\eta_{j})ds\right|\leq \|\nabla v\|\sum_{j=\left[\frac{n}{2}\right]}^{n-2}\tau_{j+1}^{2}t_{j}^{\alpha-2}\int_{t_{j}}^{t_{j+1}}1~ds.
\end{equation*}
Using estimate \eqref{24-5-1} and $t_{j}^{\alpha-2}\lesssim t_{n}^{\alpha-2}$ for $\left[\frac{n}{2}\right]\leq j \leq n-2$, we get 
\begin{equation*}
\begin{aligned}
    \left|\sum_{j=\left[\frac{n}{2}\right]}^{n-2}\int_{t_{j}}^{t_{j+1}}\left(s-t_{j}\right)\left(s-t_{j+1}\right)\frac{\partial^{2}\bar{u}}{\partial s^{2}}(\eta_{j})ds\right|
    &\lesssim T^{2}N^{-2r}n^{2(r-1)}t_{n}^{\alpha-2}\int_{t_{\left[\frac{n}{2}\right]}}^{t_{n-1}}1~ds\|\nabla v\|\\
    &\lesssim T^{2}N^{-2r}n^{2(r-1)}t_{n}^{\alpha-2}t_{n-1}\|\nabla v\|\\
    &\lesssim T^{2}N^{-2r}n^{2(r-1)}t_{n}^{\alpha-1}\|\nabla v\|.
    \end{aligned}
\end{equation*}
 Put the value of $t_{n}$, we get 
\begin{equation}\label{1c}
\begin{aligned}
    \left|\sum_{j=\left[\frac{n}{2}\right]}^{n-2}\int_{t_{j}}^{t_{j+1}}\left(s-t_{j}\right)\left(s-t_{j+1}\right)\frac{\partial^{2}\bar{u}}{\partial s^{2}}(\eta_{j})ds\right|
    &\lesssim N^{-r(\alpha+1)}n^{r(1+\alpha)-2}\|\nabla v\|.
    \end{aligned}
\end{equation}
Similarly the last part of \eqref{S15} is estimated as follows 
\begin{equation}\label{1d}
    \tau_{n}^{2}t_{n-1}^{\alpha-1}\lesssim \tau_{n}^{2}t_{n}^{\alpha-1}\lesssim  N^{-r(\alpha+1)}n^{r(1+\alpha)-2}.
\end{equation}
Combine the estimates \eqref{1a}-\eqref{1d} in \eqref{S15} to deduce the result. 
\end{proof}

\subsection{Fully discrete formulation and a priori bounds on its solution}\label{8-7-3}
Based on the approximations discussed above in Section \ref{8-7-2a}, we develop the following linearized  fractional Crank-Nicolson-Galerkin fully discrete numerical scheme:  Find $u_{h}^{n}~(n\geq 1)$ such that following equations hold for all $v_{h}\in \mathcal{V}_{h}$,\\

\noindent For $n=1$
\begin{equation}\label{27-5-9a}
    \left(~^{\sigma}\mathbb{D}^{\alpha}_{t}u_{h}^{1},v_{h}\right)+\left(1+\|\nabla u_{h}^{1,\sigma}\|^{2}\right)\left(\nabla u_{h}^{1,\sigma},\nabla v_{h}\right)=\left(f_{h}^{1-\sigma},v_{h}\right)+(1-\sigma)\tau_{1}\left(\nabla u_{h}^{1,\sigma},\nabla v_{h}\right),
\end{equation}

\noindent for $n\geq 2$
\begin{equation}\label{27-5-9b}
\begin{aligned}
    \left(~^{\sigma}\mathbb{D}^{\alpha}_{t}u_{h}^{n},v_{h}\right)+\left(1+\|\nabla \tilde{u}_{h}^{n-1,\sigma}\|^{2}\right)\left(\nabla u_{h}^{n,\sigma},\nabla v_{h}\right)&=\left(f_{h}^{n-\sigma},v_{h}\right)
    +\sum_{j=1}^{n-1}\tilde{\tau}_{j}\left(\nabla u_{h}^{j},\nabla v_{h}\right),
    \end{aligned}
\end{equation}
where $\tilde{\tau}_{j}~(1\leq j \leq n-1)$ is defined by $\tilde{\tau}_{1}=(1-\sigma)\tau_{2}$ for $n=2$ and for $n \geq 3$

\begin{equation*}
   \tilde{\tau}_{j}=\begin{cases}
   \frac{\tau_{j+1}}{2}, & j=1\\
            \frac{\tau_{j}+\tau_{j+1}}{2}, & 2\leq j \leq n-2\\
            \frac{\tau_{j}}{2}+(1-\sigma)\tau_{j+1},& j=n-1.
\end{cases}
\end{equation*}
with initial condition $u_{h}^{0}=R_{h}u_{0}$. Using the property \eqref{24-5-3} of the graded mesh we have $\tilde{\tau}_{j}\lesssim \tau_{j},(1\leq j \leq n-1)$.\\

\noindent Now we derive a priori bounds on the solution of numerical scheme \eqref{27-5-9a}-\eqref{27-5-9b}. The presence of the memory term in \eqref{22-5-1} restricts the use of fractional Gronwall's inequality \cite{liao2019discrete} while  deriving the a priori bounds. We adress this issue by defining the  following \textbf{weighted $H^{1}(\Omega)$ norm }
\begin{equation}\label{22-6-4}
    \max_{0 \leq n \leq N}\||u_{h}^{n}\||=\||u_{h}^{m}\||,~ \text{for some}~m \in \{0,1,2,\dots,N\},
\end{equation}
where  $\||u_{h}^{n}\||= \|u_{h}^{n}\|+c_{n,n}^{-1/2}\|\nabla u_{h}^{n}\|, (n\geq 1)$ and $\||u_{h}^{0}\||=\|u_{h}^{0}\|$.

\begin{thm} The solutions of the fully discrete numerical scheme \eqref{27-5-9a}-\eqref{27-5-9b} satisfy the following a  priori bounds
\begin{equation}\label{1-7-1}
     \max_{0\leq n \leq N}\||u_{h}^{n}\||\lesssim  \|u_{h}^{0}\|+\max_{1\leq n \leq N}\|f_{h}^{n-\sigma}\|,
\end{equation}
and 
\begin{equation}\label{22-6-4y}
     \max_{0\leq n \leq N}\|\nabla u_{h}^{n}\|\lesssim  \|\nabla u_{h}^{0}\|+\max_{1\leq n \leq N}\|f_{h}^{n-\sigma}\|.
\end{equation}
\end{thm}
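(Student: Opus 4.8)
The plan is to run a discrete energy argument on \eqref{27-5-9a}--\eqref{27-5-9b}, testing each equation against the intermediate value $v_h=u_h^{n,\sigma}$ and then localizing at the index $m$ that realizes the maximum in \eqref{22-6-4}. Testing with $u_h^{n,\sigma}$ is the natural choice because the nonlocal coefficient $1+\|\nabla\tilde u_h^{n-1,\sigma}\|^2\ge 1$ then multiplies the manifestly nonnegative quantity $\|\nabla u_h^{n,\sigma}\|^2$, so this possibly large factor can only help and never has to be controlled a priori. For the fractional term I would invoke the positivity of the L2-1$_{\sigma}$ operator --- the inequality \eqref{22-5-1a} together with its intermediate-value companion --- to obtain $\bigl(^{\sigma}\mathbb{D}^{\alpha}_t u_h^n,u_h^{n,\sigma}\bigr)\ge \tfrac12\,{}^{\sigma}\mathbb{D}^{\alpha}_t\|u_h^n\|^2$; expanding the right-hand side by the explicit formula \eqref{SA6} isolates the leading term $\tfrac12 c_{n,n}\|u_h^n\|^2$ together with a telescoping combination of lower levels carrying the weights $c_{n,j+1}-c_{n,j}$.

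Next I would evaluate the inequality at $n=m$ and use the monotonicity \eqref{8-7-1a}. Bounding every lower-level contribution by $\|u_h^j\|\le\||u_h^j\||\le\||u_h^m\||$ and telescoping $\sum_{j=1}^{m-1}(c_{m,j+1}-c_{m,j})=c_{m,m}-c_{m,1}$ produces a factor $\tfrac12(c_{m,m}-c_{m,1})\||u_h^m\||^2$ on the right which cancels most of the leading $\tfrac12 c_{m,m}\|u_h^m\|^2$ on the left. The role of the weight $c_{n,n}^{-1/2}$ in \eqref{22-6-4} is exactly to make the surviving left-hand side comparable to $c_{m,m}\||u_h^m\||^2$, since $c_{m,m}\||u_h^m\||^2\simeq c_{m,m}\|u_h^m\|^2+\|\nabla u_h^m\|^2$; the diffusion coercivity supplies the $\|\nabla u_h^m\|^2$ half after reconciling the intermediate gradient $\|\nabla u_h^{n,\sigma}\|$ with the nodal gradient $\|\nabla u_h^n\|$ by means of \eqref{24-5-3} and the comparability $c_{n-1,n-1}\sim c_{n,n}$ that follows from \eqref{24-5-3} and the two-sided bounds \eqref{8-7-1}--\eqref{8-7-2}. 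The source term is split by Cauchy--Schwarz and a weighted Young inequality against the $c_{m,m}\||u_h^m\||^2$ scale.

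The crux is the memory sum $\sum_{j=1}^{m-1}\tilde\tau_j(\nabla u_h^j,\nabla u_h^{m,\sigma})$. Here I would use Cauchy--Schwarz and then the weighted norm in the form $\|\nabla u_h^j\|\le c_{j,j}^{1/2}\||u_h^j\||\le c_{j,j}^{1/2}\||u_h^m\||$, so that the whole sum is dominated by $c_{m,m}^{1/2}\||u_h^m\||^2\sum_{j=1}^{m-1}\tilde\tau_j c_{j,j}^{1/2}$. The decisive ingredient, which takes the place of the discrete fractional Gr\"onwall inequality that the memory term has rendered inapplicable, is the graded-mesh estimate $\sum_{j=1}^{m-1}\tilde\tau_j c_{j,j}^{1/2}\lesssim c_{m,m}^{1/2}$; using $\tilde\tau_j\lesssim\tau_j$, the scaling $c_{j,j}\sim\tau_j^{-\alpha}$ from \eqref{8-7-1}--\eqref{8-7-2}, and the mesh relations \eqref{24-5-1}--\eqref{24-5-2}, this collapses to the elementary inequality $(m/N)^r\le 1$. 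Consequently the memory term is controlled by a fixed multiple of $c_{m,m}\||u_h^m\||^2$ and, after choosing the Young parameters small, can be absorbed on the left. Collecting everything, dividing by $c_{m,m}$, and noting that $c_{m,1}/c_{m,m}\le 1$ and that $c_{m,m}\ge c_{N,N}\gtrsim 1$ stays bounded below uniformly, one arrives at \eqref{1-7-1}; the base case $n=1$ in \eqref{27-5-9a} is treated directly by the same test.

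For the companion bound \eqref{22-6-4y} I would repeat the argument after testing with $-\Delta_h u_h^{n,\sigma}$. Since the constant-in-space operator ${}^{\sigma}\mathbb{D}^{\alpha}_t$ commutes with $\nabla$, the definition \eqref{8-7-1w} together with the positivity transfers the latter to the gradient, giving $\bigl(^{\sigma}\mathbb{D}^{\alpha}_t u_h^n,-\Delta_h u_h^{n,\sigma}\bigr)\ge\tfrac12\,{}^{\sigma}\mathbb{D}^{\alpha}_t\|\nabla u_h^n\|^2$, while the coefficient $1+\|\nabla\tilde u_h^{n-1,\sigma}\|^2\ge1$ again multiplies the nonnegative $\|\Delta_h u_h^{n,\sigma}\|^2$; the same telescoping-and-absorption mechanism then closes the estimate in $\|\nabla u_h^n\|$. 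I expect the memory term to be the main obstacle throughout: it is precisely what forbids the classical fractional Gr\"onwall route, and closing the estimate requires the simultaneous, finely balanced absorption of the lower-level gradient sum and of the intermediate/nodal gradient discrepancy into the current-level positivity and coercivity terms --- which is what the calibrated weight $c_{n,n}^{-1/2}$ and the summation bound $\sum_{j<m}\tilde\tau_j c_{j,j}^{1/2}\lesssim c_{m,m}^{1/2}$ are designed to accomplish.
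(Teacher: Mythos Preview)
Your overall architecture --- energy testing, the weighted norm \eqref{22-6-4}, and the summation estimate $\sum_{j<m}\tilde\tau_j\,c_{j,j}^{1/2}\lesssim c_{m,m}^{1/2}$ --- matches the paper's, but the way you propose to close the argument has a genuine gap. After replacing every lower-level quantity by the maximum you are left with
\[
\tfrac12\,c_{m,m}\|u_h^m\|^2+\|\nabla u_h^{m,\sigma}\|^2
\;\le\; \tfrac12(c_{m,m}-c_{m,1})\,\||u_h^m\||^2 \;+\; C(T)\,c_{m,m}\,\||u_h^m\||^2 \;+\; \text{data},
\]
and neither right-hand term can be absorbed. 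First, $\||u_h^m\||^2\ge \|u_h^m\|^2$, so the telescoped fractional sum already overruns the leading $L^2$ piece on the left (for large $m$ one has $c_{m,1}\ll c_{m,m}$, hence the surviving $L^2$ coefficient is negative); the gradient coercivity cannot compensate because $\|\nabla u_h^{m,\sigma}\|^2$ admits no uniform lower bound by $\|\nabla u_h^m\|^2$. Second, your ``fixed multiple'' in the memory estimate is actually $\sim T$: tracing constants, $\bigl(\sum_{j<m}\tau_j^{\,1-\alpha/2}\bigr)\big/\tau_m^{-\alpha/2}\sim T\,(m/N)^r\le T$, so no Young parameter makes it small.

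The paper sidesteps both issues. It tests with $v_h=u_h^n$ rather than $u_h^{n,\sigma}$, which delivers the clean coercivity $(1-\sigma)\|\nabla u_h^n\|^2$ and pushes the cross term $\sigma(\nabla u_h^{n-1},\nabla u_h^n)$ into the lower-level sum via the modified weights $\tilde\tau_j'$. More importantly, it never attempts absorption: using $\|u_h^j\|^2\le\||u_h^j\||\,\||u_h^m\||$ it peels off one factor of $\||u_h^m\||$, divides through, and arrives at a \emph{linear} recursion
\[
\||u_h^m\||\;\lesssim\;\sum_{j<m}w_{m,j}\,\||u_h^j\||+c_{m,m}^{-1}c_{m,1}\|u_h^0\|+c_{m,m}^{-1}\|f_h^{m-\sigma}\|,
\qquad
w_{m,j}=c_{m,m}^{-1}(c_{m,j+1}-c_{m,j})+c_{m,m}^{-1/2}\tilde\tau_j'\,c_{j,j}^{1/2}.
\]
Your own summation calculation then gives $\sum_{j}w_{m,j}\lesssim 1+\gamma^{\alpha/2}+T$, bounded but not small, which is exactly the regime where the \emph{standard} discrete Gr\"onwall inequality (not the fractional one) applies and yields \eqref{1-7-1} with a hidden $\exp(CT)$-type constant. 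So the fractional Gr\"onwall is indeed unavailable, but the remedy is a classical Gr\"onwall step, not absorption. The estimate \eqref{22-6-4y} follows identically after testing with $-\Delta_h u_h^n$.
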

\begin{proof}
For $n\geq 2$, put $v_{h}=u_{h}^{n}$ in \eqref{27-5-9b} we get 
\begin{equation}\label{22-6-1}
\begin{aligned}
    \left(~^{\sigma}\mathbb{D}^{\alpha}_{t}u_{h}^{n},u_{h}^{n}\right)+\left(1+\|\nabla \tilde{u}_{h}^{n-1,\sigma}\|^{2}\right)\left(\nabla u_{h}^{n,\sigma},\nabla u_{h}^{n}\right)&=\left(f_{h}^{n-\sigma},u_{h}^{n}\right)
    +\sum_{j=1}^{n-1}\tilde{\tau}_{j}\left(\nabla u_{h}^{j},\nabla u_{h}^{n}\right).
    \end{aligned}
\end{equation}
Using \eqref{22-5-1a} and Cauchy-Schwarz inequality in \eqref{22-6-1} we have 
\begin{equation*}\label{22-6-2}
\begin{aligned}
    ~^{\sigma}\mathbb{D}^{\alpha}_{t}\|u_{h}^{n}\|^{2}+\|\nabla u_{h}^{n}\|^{2}\lesssim \|\nabla u_{h}^{n-1}\|~\|\nabla u_{h}^{n}\|+\|f_{h}^{n-\sigma}\|~\|u_{h}^{n}\|
    +\sum_{j=1}^{n-1}\tilde{\tau}_{j}\|\nabla u_{h}^{j}\|~\|\nabla u_{h}^{n}\|.
    \end{aligned}
\end{equation*}
Definition of $~^{\sigma}\mathbb{D}^{\alpha}_{t}\|u_{h}^{n}\|^{2}$ \eqref{SA6} yields 
\begin{equation}\label{22-6-3}
\begin{aligned}
     c_{n,n}\|u_{h}^{n}\|^{2}+\|\nabla u_{h}^{n}\|^{2}&\lesssim  \sum_{j=1}^{n-1}\left(c_{n,j+1}-c_{n,j}\right)\|u_{h}^{j}\|^{2}+c_{n,1}\|u_{h}^{0}\|^{2}+ \|f_{h}^{n-\sigma}\|~\|u_{h}^{n}\|\\
     &+\sum_{j=1}^{n-1}\tilde{\tau}_{j}^{'}\|\nabla u_{h}^{j}\|~\|\nabla u_{h}^{n}\|,
    \end{aligned}
\end{equation}
where $\tilde{\tau}_{j}^{'}$ is defined as 
\begin{equation}\label{22-6-3a}
   \tilde{\tau}_{j}^{'}=\begin{cases}
   \tilde{\tau}_{j}, & 1\leq j \leq n-2\\
            1+\tilde{\tau}_{j}, & j= n-1.
\end{cases}
\end{equation}
Using weighted $H^{1}(\Omega)$ norm  \eqref{22-6-4} in \eqref{22-6-3} we obtain
\begin{equation}\label{22-6-5}
\begin{aligned}
     \||u_{h}^{m}\||^{2}&\lesssim  c_{m,m}^{-1}\sum_{j=1}^{m-1}\left(c_{m,j+1}-c_{m,j}\right)\||u_{h}^{j}\||~\||u_{h}^{m}\||+c_{m,m}^{-1}c_{m,1}\|u_{h}^{0}\|~\||u_{h}^{m}\||\\
     &+c_{m,m}^{-1} \|f_{h}^{m-\sigma}\|~\||u_{h}^{m}\||
    +c_{m,m}^{-1/2}\sum_{j=1}^{m-1}\tilde{\tau}_{j}^{'}c_{j,j}^{1/2}\|| u_{h}^{j}\||~\|| u_{h}^{m}\||,
    \end{aligned}
\end{equation}
On cancelling $\||u_{h}^{m}\||$ from both sides of \eqref{22-6-5} we get 
\begin{equation}\label{22-6-6}
\begin{aligned}
     \||u_{h}^{m}\||&\lesssim  c_{m,m}^{-1}\sum_{j=1}^{m-1}\left(c_{m,j+1}-c_{m,j}\right)\||u_{h}^{j}\||+c_{m,m}^{-1}c_{m,1}\|u_{h}^{0}\|+c_{m,m}^{-1} \|f_{h}^{m-\sigma}\|\\
     &
    +c_{m,m}^{-1/2}\sum_{j=1}^{m-1}\tilde{\tau}_{j}^{'}~c_{j,j}^{1/2}\|| u_{h}^{j}\||.
    \end{aligned}
\end{equation}
 We  apply the  discrete Gronwall inequality to obtain 
\begin{equation}\label{22-6-7}
\begin{aligned}
     \||u_{h}^{m}\||&\lesssim  \exp\left(\sum_{j=1}^{m-1}c_{m,m}^{-1}\left(c_{m,j+1}-c_{m,j}\right)+c_{m,m}^{-1/2}\tilde{\tau}_{j}^{'}~c_{j,j}^{1/2}\right)\left(c_{m,m}^{-1}c_{m,1}\|u_{h}^{0}\|
     +c_{m,m}^{-1} \|f_{h}^{m-\sigma}\|\right).
    \end{aligned}
\end{equation}
Let us denote $w_{m,j}=c_{m,m}^{-1}\left(c_{m,j+1}-c_{m,j}\right)+c_{m,m}^{-1/2}\tilde{\tau}_{j}^{'}~c_{j,j}^{1/2}$ then 
\begin{equation}\label{22-6-7b}
    \sum_{j=1}^{m-1}w_{m,j}=1-c_{m,m}^{-1}c_{m,1}+\sum_{j=1}^{m-1}c_{m,m}^{-1/2}\tilde{\tau}_{j}^{'}~c_{j,j}^{1/2}, 
\end{equation}
Using \eqref{8-7-1a} and  the definition \eqref{22-6-3a} of $\tilde{\tau}_{j}^{'}$ we have 
\begin{equation*}\label{22-6-7c}
    \sum_{j=1}^{m-1}w_{m,j}\lesssim 1+c_{m,m}^{-1/2}c_{m-1,m-1}^{1/2}+\sum_{j=1}^{m-1}c_{m,m}^{-1/2}\tilde{\tau}_{j}~c_{j,j}^{1/2}. 
\end{equation*}
Using the properties of weights $c_{m,j}$ \eqref{8-7-1}-\eqref{8-7-2} we get 
\begin{equation*}\label{22-6-7d}
    \sum_{j=1}^{m-1}w_{m,j}\lesssim 1+\tau_{m}^{\alpha/2}\tau_{m-1}^{-\alpha/2}+\tau_{m}^{\alpha/2}\sum_{j=1}^{m-1}\tilde{\tau}_{j}~\tau_{j}^{-\alpha/2}, 
\end{equation*}
apply the graded mesh property  \eqref{24-5-3} and $\tilde{\tau}_{j}\lesssim \tau_{j}$ to obtain 
\begin{equation}\label{22-6-7e}
\begin{aligned}
    \sum_{j=1}^{m-1}w_{m,j}&\lesssim 1+\gamma^{\alpha/2} +\tau_{m}^{\alpha/2}\sum_{j=1}^{m-1}\tau_{j}^{1-\alpha/2}\\
    &\lesssim 1+\gamma^{\alpha/2} +\tau_{m}^{\alpha/2}\sum_{j=1}^{m}\tau_{j}^{1-\alpha/2}.
    \end{aligned}
\end{equation}
Using graded mesh property \eqref{24-5-1} we have 
\begin{equation}\label{6-8-1}
\begin{aligned}
    \sum_{j=1}^{m-1}w_{m,j}
    &\lesssim 1+\gamma^{\alpha/2} +\sum_{j=1}^{m}N^{-\alpha/2}t_{m}^{(1-1/r)(\alpha/2)}N^{-(1-\alpha/2)}t_{j}^{(1-1/r)(1-\alpha/2)}\\
    &\lesssim 1+\gamma^{\alpha/2}+ N^{-1}\sum_{j=1}^{m}T\\
    & \lesssim 1+\gamma^{\alpha/2}+ N^{-1}NT=  1+\gamma^{\alpha/2}+ T.
    \end{aligned}
\end{equation}
Use \eqref{6-8-1}  in \eqref{22-6-7} to deduce 
\begin{equation*}
     \||u_{h}^{m}\||\lesssim c_{m,m}^{-1}c_{m,1}\|u_{h}^{0}\|
     +c_{m,m}^{-1} \|f_{h}^{m-\sigma}\|.
\end{equation*}
The properties \eqref{8-7-1a}-\eqref{8-7-2} of graded mesh and $\tau_{m}^{\alpha} < t_{m}^{\alpha} \leq T^{\alpha}$ imply 
\begin{equation*}
     \||u_{h}^{m}\||\lesssim \|u_{h}^{0}\|
     +\|f_{h}^{m-\sigma}\|.
\end{equation*}
Using the similar technique, we can get  a priori bound for the case $n=1$ also. Now we prove the estimate \eqref{22-6-4y}. By the definition of discrete Laplacian operator \eqref{8-7-1w}, the equation \eqref{27-5-9b} can be rewritten as 
\begin{equation}\label{27-5-9bb}
\begin{aligned}
    \left(~^{\sigma}\mathbb{D}^{\alpha}_{t}u_{h}^{n},v_{h}\right)+\left(1+\|\nabla \tilde{u}_{h}^{n-1,\sigma}\|^{2}\right)\left(-\Delta_{h} u_{h}^{n,\sigma}, v_{h}\right)&=\left(f_{h}^{n-\sigma},v_{h}\right)
    +\sum_{j=1}^{n-1}\tilde{\tau}_{j}\left(-\Delta_{h} u_{h}^{j}, v_{h}\right).
    \end{aligned}
\end{equation}
Put $v_{h}=-\Delta_{h}u_{h}^{n}$ in \eqref{27-5-9bb} and follow the same steps as we obtain estimate \eqref{1-7-1}.
\end{proof}

\subsection{ Well-posedness of the fully discrete formuation \eqref{27-5-9a}-\eqref{27-5-9b} }\label{8-7-4}

\begin{thm}\label{1-7-2} \textbf{(Existence)}
There exists a solution to the fully discrete numerical scheme \eqref{27-5-9a}-\eqref{27-5-9b}.
\end{thm}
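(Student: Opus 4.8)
The plan is to establish existence by induction on the time level $n$, invoking at each step the standard finite-dimensional consequence of Brouwer's fixed point theorem: if $P:\mathcal{V}_h\to\mathcal{V}_h$ is continuous and there is a radius $\rho>0$ with $(P(w),w)\geq 0$ for every $w$ satisfying $\|w\|=\rho$, then $P$ vanishes at some $w^{*}$ with $\|w^{*}\|\leq\rho$. At level $n$ I would rewrite the scheme \eqref{27-5-9a}--\eqref{27-5-9b} as $(P_n(w),v_h)=0$ for all $v_h\in\mathcal{V}_h$, where $w$ denotes the unknown $u_h^n$ and $P_n(w)\in\mathcal{V}_h$ collects the left-hand side minus the right-hand side. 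Since $\mathcal{V}_h$ is finite dimensional and $P_n$ depends polynomially on $w$, continuity is immediate, so the whole task reduces to exhibiting a sphere on which $(P_n(w),w)\geq 0$.

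First I would treat the base case $n=1$, which is the genuinely nonlinear one because the Kirchhoff coefficient $\left(1+\|\nabla u_h^{1,\sigma}\|^{2}\right)$ depends on the unknown through $u_h^{1,\sigma}=(1-\sigma)w+\sigma u_h^{0}$. Using $~^{\sigma}\mathbb{D}^{\alpha}_{t}u_h^{1}=c_{1,1}(w-u_h^{0})$ and testing with $v_h=w$ gives
\begin{equation*}
(P_1(w),w)=c_{1,1}\|w\|^{2}+\left(1+\|\nabla w^{\sigma}\|^{2}\right)(\nabla w^{\sigma},\nabla w)-(1-\sigma)\tau_{1}(\nabla w^{\sigma},\nabla w)-c_{1,1}(u_h^{0},w)-(f_h^{1-\sigma},w),
\end{equation*}
with $w^{\sigma}=(1-\sigma)w+\sigma u_h^{0}$. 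Expanding $(\nabla w^{\sigma},\nabla w)=(1-\sigma)\|\nabla w\|^{2}+\sigma(\nabla u_h^{0},\nabla w)$ shows that the Kirchhoff term contributes a leading $(1-\sigma)^{3}\|\nabla w\|^{4}$, while every other contribution grows at most like $\|\nabla w\|^{2}$ or $\|w\|$. Because $c_{1,1}>0$ and, on the fixed finite-dimensional space $\mathcal{V}_h$, the norms $\|\nabla\cdot\|$ and $\|\cdot\|$ are equivalent, the quartic term dominates and $(P_1(w),w)>0$ once $\|w\|=\rho$ is large enough; Brouwer's lemma then yields $u_h^{1}$.

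For the inductive step $n\geq 2$ I would assume $u_h^{0},\dots,u_h^{n-1}$ are already constructed and observe that the problem is now linear in $w=u_h^{n}$: the coefficient $\left(1+\|\nabla\tilde{u}_h^{n-1,\sigma}\|^{2}\right)$ and the memory sum $\sum_{j=1}^{n-1}\tilde{\tau}_j(\nabla u_h^{j},\nabla v_h)$ are frozen at known past values, and $u_h^{n,\sigma}=(1-\sigma)w+\sigma u_h^{n-1}$ is affine in $w$ while $~^{\sigma}\mathbb{D}^{\alpha}_{t}u_h^{n}=c_{n,n}w+(\text{known})$. Testing $P_n(w)$ against $w$ and using that the frozen coefficient is $\geq 1$, the positivity of $c_{n,n}$ from \eqref{8-7-2}, and $(1-\sigma)>0$ produces $(P_n(w),w)\geq c_{n,n}\|w\|^{2}+(1-\sigma)\|\nabla w\|^{2}-C\|w\|$, which is positive for $\|w\|$ large. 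Equivalently, the associated coefficient matrix is positive definite, hence invertible; either way this delivers $u_h^{n}$ and closes the induction.

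The main obstacle is the base case: there the Kirchhoff diffusion coefficient is evaluated at the unknown, so the discrete equation for $u_h^{1}$ is a genuine cubic nonlinearity that cannot be handled by linear algebra. The crux is the coercivity-type sign estimate on the sphere, which succeeds precisely because the monotone quartic structure of the Kirchhoff term controls the sign-indefinite lower-order pieces, namely the memory contribution $-(1-\sigma)\tau_1(\nabla w^{\sigma},\nabla w)$ and the data terms. For $n\geq 2$ this difficulty evaporates since the coefficient is frozen and the step is effectively linear.
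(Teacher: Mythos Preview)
Your proposal is correct and follows the same overall strategy as the paper: Brouwer's fixed point theorem for the nonlinear step $n=1$, and positive definiteness of the linear system for $n\geq 2$. The execution at $n=1$ differs slightly. The paper first multiplies \eqref{27-5-9a} by $(1-\sigma)$ so that the unknown becomes $u_h^{1,\sigma}$ rather than $u_h^{1}$; testing the resulting map $G$ against $u_h^{1,\sigma}$ then produces the clean monotone term $(1-\sigma)c_{1,1}^{-1}\bigl(1+\|\nabla u_h^{1,\sigma}\|^{2}\bigr)\|\nabla u_h^{1,\sigma}\|^{2}$ without cross terms, and the paper concludes coercivity ``for sufficiently small $\tau_1$'' so that the memory contribution $-c_{1,1}^{-1}(1-\sigma)^{2}\tau_1\|\nabla u_h^{1,\sigma}\|^{2}$ is absorbed. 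You instead keep $w=u_h^{1}$ as the variable, expand $w^{\sigma}=(1-\sigma)w+\sigma u_h^{0}$, and exploit the quartic leading term $(1-\sigma)^{3}\|\nabla w\|^{4}$ together with norm equivalence on $\mathcal{V}_h$ to dominate all lower-order pieces on a large sphere. Your route is a little messier algebraically (cubic cross terms appear), but it has the advantage of not requiring any smallness of $\tau_1$; the paper's change of variable is tidier but, as written, invokes an extra restriction that your quartic argument shows to be unnecessary.
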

\begin{proof}
For $n\geq 2$, the equation \eqref{27-5-9b} is linear in $u_{h}^{n}$ and the corresponding coefficient matrix is positive definite which ensures the existence and uniqueness of solution $u_{h}^{n}~(n\geq 2)$ of the problem \eqref{27-5-9b}. To show the existence of the solution $u_{h}^{1}$, we use the following variant of Br\"{o}uwer fixed point theorem \cite{thomee2007galerkin} which says that if  $\mathbb{H}$ be a    finite dimensional Hilbert space and  $G:\mathbb{H}\rightarrow \mathbb{H}$ be a continuous map such that $\left(G(\hat{w}),\hat{w}\right)>0$  for all $\hat{w}$ in $\mathbb{H}$ with $\|\hat{w}\|=r_{0},~r_{0}>0$.  Then there exists a $\tilde{w}$ in $\mathbb{H}$ such that $G(\tilde{w})=0$ and $\|\tilde{w}\|\leq r_{0}.$ \\

\noindent Put the definition of $^{\sigma}\mathbb{D}^{\alpha}_{t}u_{h}^{1}$ in \eqref{27-5-9a} we  get 
\begin{equation}\label{23-6-8}
    \left(c_{1,1}(u_{h}^{1}-u_{h}^{0}),v_{h}\right)+\left(1+\|\nabla u_{h}^{1,\sigma}\|^{2}\right)\left(\nabla u_{h}^{1,\sigma},\nabla v_{h}\right)=\left(f_{h}^{1-\sigma},v_{h}\right)+(1-\sigma)\tau_{1}\left(\nabla u_{h}^{1,\sigma},\nabla v_{h}\right).
\end{equation}
Multiply \eqref{23-6-8} by $(1-\sigma)$ to obtain 
\begin{equation}\label{23-6-9}
\begin{aligned}
    (u_{h}^{1,\sigma},v_{h})-(u_{h}^{0},v_{h})&+(1-\sigma)c_{1,1}^{-1}\left(1+\|\nabla u_{h}^{1,\sigma}\|^{2}\right)\left(\nabla u_{h}^{1,\sigma},\nabla v_{h}\right)\\
    &=c_{1,1}^{-1}(1-\sigma)\left(f_{h}^{1-\sigma},v_{h}\right)+c_{1,1}^{-1}(1-\sigma)^{2}\tau_{1}\left(\nabla u_{h}^{1,\sigma},\nabla v_{h}\right).
    \end{aligned}
\end{equation}
In the view of \eqref{23-6-9} we define a map $G:\mathcal{V}_{h}\rightarrow \mathcal{V}_{h}$ as 
\begin{equation}\label{23-6-10}
\begin{aligned}
    (G(u_{h}^{1,\sigma}),v_{h})&= (u_{h}^{1,\sigma},v_{h})-(u_{h}^{0},v_{h})+(1-\sigma)c_{1,1}^{-1}\left(1+\|\nabla u_{h}^{1,\sigma}\|^{2}\right)\left(\nabla u_{h}^{1,\sigma},\nabla v_{h}\right)\\
    &-c_{1,1}^{-1}(1-\sigma)\left(f_{h}^{1-\sigma},v_{h}\right)-c_{1,1}^{-1}(1-\sigma)^{2}\tau_{1}\left(\nabla u_{h}^{1,\sigma},\nabla v_{h}\right).
    \end{aligned}
\end{equation}
Put $v_{h}=u_{h}^{1,\sigma}$ in \eqref{23-6-10} then it is easy to show that $(G(u_{h}^{1,\sigma}),u_{h}^{1,\sigma})>0$ for sufficiently small $\tau_{1}$    and $G$ is continuous \cite{kumar2020finite,lalit}. Therefore above stated variant of Br\"{o}uwer fixed point theorem implies the existence of $u_{h}^{1,\sigma}$. Hence existence of $u_{h}^{1}$ follows.
\end{proof}
\begin{thm}\textbf{(Uniqueness)}
The solution of the fully discrete numerical scheme \eqref{27-5-9a}-\eqref{27-5-9b} is unique.
\end{thm}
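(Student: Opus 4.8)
The plan is to split the argument by time level, since the scheme is genuinely nonlinear only at the first step. For $n\geq 2$ the Kirchhoff factor $\|\nabla\tilde u_{h}^{n-1,\sigma}\|^{2}$ and the whole memory sum $\sum_{j=1}^{n-1}\tilde{\tau}_{j}(\nabla u_{h}^{j},\nabla v_{h})$ involve only the previously computed iterates $u_{h}^{0},\dots,u_{h}^{n-1}$, so \eqref{27-5-9b} is linear in $u_{h}^{n}$. I would argue by induction: assuming $u_{h}^{0},\dots,u_{h}^{n-1}$ are already determined uniquely, take two solutions $u_{h}^{n},\hat{u}_{h}^{n}$ of \eqref{27-5-9b}, subtract, and test the resulting identity with $\phi=u_{h}^{n}-\hat{u}_{h}^{n}$. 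All data and previous-step contributions cancel, and since $u_{h}^{n,\sigma}-\hat{u}_{h}^{n,\sigma}=(1-\sigma)\phi$, the definition \eqref{SA6} of $^{\sigma}\mathbb{D}^{\alpha}_{t}$ leaves
\[
c_{n,n}\|\phi\|^{2}+(1-\sigma)\bigl(1+\|\nabla\tilde{u}_{h}^{n-1,\sigma}\|^{2}\bigr)\|\nabla\phi\|^{2}=0 .
\]
Both coefficients are strictly positive (because $c_{n,n}>0$ by \eqref{8-7-2}, $0<\sigma<1$, and $1+\|\nabla\tilde{u}_{h}^{n-1,\sigma}\|^{2}\geq 1$), so $\phi=0$; this is exactly the positive definiteness already invoked in the existence proof.

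The only delicate case is $n=1$, where \eqref{23-6-8} is nonlinear in $u_{h}^{1}$ through the factor $\|\nabla u_{h}^{1,\sigma}\|^{2}$. Here I would take two solutions $u_{h}^{1},\hat{u}_{h}^{1}$, set $\psi=u_{h}^{1,\sigma}-\hat{u}_{h}^{1,\sigma}=(1-\sigma)(u_{h}^{1}-\hat{u}_{h}^{1})$, subtract the two copies of \eqref{23-6-8}, and test with $v_{h}=\psi$. Using $u_{h}^{1}-\hat{u}_{h}^{1}=\tfrac{1}{1-\sigma}\psi$, the $^{\sigma}\mathbb{D}^{\alpha}_{t}$ term contributes $\tfrac{c_{1,1}}{1-\sigma}\|\psi\|^{2}$ and the right-hand side contributes $(1-\sigma)\tau_{1}\|\nabla\psi\|^{2}$, while the difference of the two nonlinear diffusion terms is the crux of the estimate.

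The key step, and the main obstacle, is a monotonicity estimate for the Kirchhoff operator. Writing $p=\nabla u_{h}^{1,\sigma}$ and $q=\nabla\hat{u}_{h}^{1,\sigma}$, I would expand
\[
\bigl(1+\|p\|^{2}\bigr)(p,p-q)-\bigl(1+\|q\|^{2}\bigr)(q,p-q)=\|p-q\|^{2}+\|p\|^{4}+\|q\|^{4}-(p,q)\bigl(\|p\|^{2}+\|q\|^{2}\bigr),
\]
and bound the non sign-definite remainder from below via Cauchy--Schwarz $(p,q)\leq\|p\|\,\|q\|$, which reduces it to $(\|p\|^{3}-\|q\|^{3})(\|p\|-\|q\|)\geq 0$ (monotonicity of $x\mapsto x^{3}$). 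This gives the lower bound $\|\nabla\psi\|^{2}$ for the diffusion difference, so that testing with $\psi$ yields
\[
\frac{c_{1,1}}{1-\sigma}\|\psi\|^{2}+\bigl(1-(1-\sigma)\tau_{1}\bigr)\|\nabla\psi\|^{2}\leq 0 .
\]
For $\tau_{1}$ small enough that $(1-\sigma)\tau_{1}<1$ — the same smallness of $\tau_{1}$ already assumed in the existence proof — both coefficients are positive, forcing $\psi=0$ and hence $u_{h}^{1}=\hat{u}_{h}^{1}$. Combining the two cases establishes uniqueness at every time level.
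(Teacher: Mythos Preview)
Your proof is correct and follows essentially the same route as the paper: linearity handles $n\geq 2$, while for $n=1$ you subtract two solutions, test with $Z_{h}^{1,\sigma}=\psi$, invoke monotonicity of the Kirchhoff operator, and arrive at the same inequality $\tfrac{c_{1,1}}{1-\sigma}\|\psi\|^{2}+\bigl(1-(1-\sigma)\tau_{1}\bigr)\|\nabla\psi\|^{2}\leq 0$. The only difference is cosmetic: the paper cites the monotonicity as \cite[Lemma~2.8]{kundu2016kirchhoff} and the positivity of the discrete fractional derivative as \cite[Lemma~8]{chen2019error}, whereas you prove the former inline and compute the latter directly from $Z_{h}^{0}=0$.
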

\begin{proof}
Uniqueness of the solution $u_{h}^{n},(n \geq 2)$  is given by Theorem \ref{1-7-2}. For $n=1$, let $U_{h}^{1}$ and $V_{h}^{1}$ be the solutions of the numerical scheme $\eqref{27-5-9a}$ then $Z_{h}^{1}=U_{h}^{1}-V_{h}^{1}$ satisfies
\begin{equation}\label{2-7-1}
\begin{aligned}
     \left(~^{\sigma}\mathbb{D}^{\alpha}_{t}Z_{h}^{1},v_{h}\right)&+\left(\left(1+\|\nabla U_{h}^{1,\sigma}\|^{2}\right)\nabla U_{h}^{1,\sigma}-\left(1+\|\nabla V_{h}^{1,\sigma}\|^{2}\right)\nabla V_{h}^{1,\sigma},\nabla v_{h}\right)\\
     &=(1-\sigma)\tau_{1}\left(\nabla Z_{h}^{1,\sigma},\nabla v_{h}\right).
\end{aligned}
\end{equation}
Put $v_{h}=Z_{h}^{1,\sigma}$ in \eqref{2-7-1} and using $\left(~^{\sigma}\mathbb{D}^{\alpha}_{t}Z_{h}^{1},Z_{h}^{1,\sigma}\right)\geq \frac{1}{2}~^{\sigma}\mathbb{D}^{\alpha}_{t}\|Z_{h}^{1}\|^{2}$ (\cite{chen2019error},~Lemma 8) along with the monotonicity property of the Kirchhoff term ( \cite{kundu2016kirchhoff}, Lemma 2.8) we obtain 
\begin{equation}\label{2-7-2}
    \|Z_{h}^{1}\|^{2}+c_{1,1}^{-1}\left(1-(1-\sigma)\tau_{1}\right)\|\nabla Z_{h}^{1,\sigma}\|^{2}\leq 0.
\end{equation}
For sufficiently small $\tau_{1}$, we can deduce the uniqueness of the solution of \eqref{27-5-9a}.

\end{proof}

\subsection{Global  convergence analysis}\label{8-7-5}
In this section, we prove the main result of this article. 
\begin{thm}\label{9-7-2} Suppose that the solution $u$ of the problem \eqref{22-5-1} satisfies the realistic regularity assumption \eqref{28-6-1}. Then the solution $u_{h}^{n}$ of the fully discrete numerical scheme \eqref{27-5-9a}-\eqref{27-5-9b} converges to the solution $u$ with the following rate of accuracy 
\begin{equation}\label{8-7-1z}
    \max_{1\leq n \leq N}\|u^{n}-u_{h}^{n}\| \lesssim M^{-1}+N^{-\min\{r\alpha,~2\}},
\end{equation}
and 
\begin{equation}\label{8-7-1y}
     \max_{1\leq n \leq N}\|\nabla u^{n}-\nabla u_{h}^{n}\| \lesssim M^{-1}+N^{-\min\{r\alpha,~2\}}.
\end{equation}

\end{thm}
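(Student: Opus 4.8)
The plan is to follow the standard decomposition of the error into a projection part and a finite-element part, and then to estimate the latter within the weighted $H^1(\Omega)$ norm machinery already developed for the a priori bounds. I would write
\[
u^{n}-u_{h}^{n}=(u^{n}-w^{n})+(w^{n}-u_{h}^{n})=:\rho^{n}+\theta^{n},
\]
where $w$ is the modified Ritz--Volterra projection of \eqref{23-6-4}. The projection part $\rho^{n}$ is controlled directly by Theorem \ref{23-6-5}, which gives $\|\rho^{n}\|+h\|\nabla\rho^{n}\|\lesssim h^{2}$ together with the companion bound \eqref{23-6-7} for its Caputo derivative; expressed through the spatial degrees of freedom these contribute the $M^{-1}$ term. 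Hence by the triangle inequality it suffices to bound $\theta^{n}\in\mathcal{V}_{h}$ in $L^{2}(\Omega)$ and in $H^{1}_{0}(\Omega)$ by $M^{-1}+N^{-\min\{r\alpha,2\}}$.

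The central task is to derive and estimate an error equation for $\theta^{n}$. I would evaluate the weak formulation \eqref{22-5-2} at $t=t_{n-\sigma}$, subtract the fully discrete scheme \eqref{27-5-9a}--\eqref{27-5-9b}, and use the defining identity \eqref{23-6-4} of the projection to replace the continuous diffusion-plus-memory pair by its $w$-based counterpart; approximating $\int_{0}^{t_{n-\sigma}}(\nabla w(s),\nabla v_{h})\,ds$ by the quadrature sum $\sum_{j}\tilde{\tau}_{j}(\nabla w^{j},\nabla v_{h})$ then leaves a genuine memory term $\sum_{j=1}^{n-1}\tilde{\tau}_{j}(\nabla\theta^{j},\nabla v_{h})$ on the right-hand side. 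The remaining right-hand side collects the four consistency errors already estimated: the L2-1$_{\sigma}$ truncation error (Lemma \ref{25-5-1}), the fractional-derivative projection error $^{\sigma}\mathbb{D}^{\alpha}_{t}\rho^{n}$ (controlled by \eqref{23-6-7}), the Kirchhoff linearization errors $\tilde{\mathcal{L}}_{u}^{n-\sigma}$ and $\mathcal{L}_{u}^{n-\sigma}$ (Lemma \ref{25-5-5}), and the memory quadrature error $\mathcal{Q}_{u}^{n-\sigma}$ (Lemma \ref{27-5-7}). The frozen Kirchhoff factor requires extra care: I would expand $\|\nabla u^{n-\sigma}\|^{2}-\|\nabla\tilde{u}_{h}^{n-1,\sigma}\|^{2}$ and split it into a linearization contribution (Lemma \ref{25-5-5}) and a discretization contribution written through $\nabla\theta$ and $\nabla\rho$, keeping the products bounded via the uniform bounds $\|\nabla w\|\lesssim K$, $\|\Delta_{h}w\|\lesssim K$ of \eqref{23-6-5aa} and the a priori bounds \eqref{1-7-1}--\eqref{22-6-4y} on $u_{h}^{n}$.

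Next I would test the error equation with $v_{h}=\theta^{n}$ for the $L^{2}$ estimate and with $v_{h}=-\Delta_{h}\theta^{n}$ (after rewriting the scheme as in \eqref{27-5-9bb}) for the $H^{1}$ estimate, apply the positivity property \eqref{22-5-1a} to extract $\tfrac12\,{}^{\sigma}\mathbb{D}^{\alpha}_{t}\|\theta^{n}\|^{2}$, and bound every remaining term by Cauchy--Schwarz and Young's inequality. Since the memory term rules out the fractional Gr\"onwall inequality, I would then mimic the a priori-bound argument of Section \ref{8-7-3} verbatim: expand $^{\sigma}\mathbb{D}^{\alpha}_{t}\|\theta^{n}\|^{2}$ through \eqref{SA6}, pass to the weighted norm $\||\theta^{n}\||$ of \eqref{22-6-4}, and apply the discrete Gr\"onwall inequality. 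The decisive point is that the accumulated weights $\sum_{j}w_{m,j}$ stay uniformly bounded, exactly as in \eqref{6-8-1}, so the consistency errors do not amplify across the $N$ time levels; the individual $L^{2}$ and $H^{1}$ bounds on $\theta^{n}$ then follow from the definition of $\||\cdot\||$ and the lower bound \eqref{8-7-2} on $c_{n,n}$, and the triangle inequality with the projection estimates yields \eqref{8-7-1z}--\eqref{8-7-1y}.

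The hard part will be the bookkeeping in the weighted-norm Gr\"onwall step once the memory weights $\tilde{\tau}_{j}$ and the nonlinear discretization remainder are folded into the coefficients $w_{m,j}$: one must verify that these additional contributions still obey the summability estimate \eqref{6-8-1} (using \eqref{24-5-1}, \eqref{24-5-3}, the bound $\tilde{\tau}_{j}\lesssim\tau_{j}$, and the weight properties \eqref{8-7-1a}--\eqref{8-7-2}), and that the aggregated consistency error telescopes to the claimed order $M^{-1}+N^{-\min\{r\alpha,2\}}$ after multiplication by $c_{m,m}^{-1}$ and summation. I expect the dominant temporal term to be the $N^{-\min\{r\alpha,2\}}$ coming from the linearization and memory-quadrature errors, with the sharper truncation rate $N^{-\min\{r\alpha,3-\alpha\}}$ of Lemma \ref{25-5-1} being absorbed after the division by $c_{m,m}\sim\tau_{m}^{-\alpha}$ and the cancellation of the singular factor $t_{n-\sigma}^{-\alpha}$.
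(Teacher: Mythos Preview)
Your proposal is correct and follows essentially the same approach as the paper's own proof: the same $\rho^{n}/\theta^{n}$ splitting through the modified Ritz--Volterra projection, the same error equation combining the truncation, linearization, projection, and quadrature residuals, the same test functions $v_{h}=\theta^{n}$ and $v_{h}=-\Delta_{h}\theta^{n}$, and the same weighted-norm discrete Gr\"onwall argument mirroring the a priori bound proof. The only cosmetic difference is that the paper writes the fractional-derivative residual as $(^{C}\mathcal{D}^{\alpha}_{t_{n-\sigma}}\rho,v_{h})+(\mathcal{T}_{w}^{n-\sigma},v_{h})$ rather than $(\mathcal{T}_{u}^{n-\sigma},v_{h})+({}^{\sigma}\mathbb{D}^{\alpha}_{t}\rho^{n},v_{h})$, which is an equivalent regrouping.
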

\begin{proof} 
Let us write $u^{n}-u_{h}^{n}=u^{n}-w^{n}+w^{n}-u_{h}^{n}$. Further denote $u^{n}-w^{n}=\rho^{n}$ and $u_{h}^{n}-w^{n}=\theta^{n}$. Using weak formulation \eqref{22-5-2}, modified Ritz-Volterra projection operator \eqref{23-6-4} in the fully discrete numerical scheme \eqref{27-5-9b} we get 
\begin{equation}\label{6-7-1}
\begin{aligned}
    &\left(^{\sigma}\mathbb{D}^{\alpha}_{t}\theta^{n},v_{h}\right)+\left(1+\|\nabla \tilde{u}_{h}^{n-1,\sigma}\|^{2}\right)\left(\nabla \theta^{n,\sigma},\nabla v_{h}\right)\\
    &=\left(^{C}\mathcal{D}^{\alpha}_{t_{n-\sigma}}\rho,v_{h}\right)+\left(\mathcal{T}_{w}^{n-\sigma},v_{h}\right)+\left(1+\|\nabla u^{n-\sigma}\|^{2}\right)\left(\nabla \mathcal{L}_{w}^{n-\sigma},\nabla v_{h}\right)\\
    &+\left(\|\nabla u^{n-\sigma}\|+\|\nabla \tilde{u}_{h}^{n-1,\sigma}\|\right)\left(\|\nabla \tilde{\mathcal{L}}_{u}^{n-\sigma}+\nabla \tilde{\rho}^{n-1,\sigma}+\nabla \tilde{\theta}^{n-1,\sigma}\|\right)\left(\nabla w^{n,\sigma},\nabla v_{h}\right)\\
    &+\left(\mathcal{Q}^{n-\sigma}_{w},\theta^{n}\right)+\sum_{j=1}^{n-1}\tilde{\tau}_{j}(\nabla \theta^{j},\nabla v_{h}).
    \end{aligned}
\end{equation}
Put $v_{h}=\theta^{n}$ in \eqref{6-7-1} and using   \eqref{22-5-1a} along with  Cauchy-Schwarz inequality  we obtain 
\begin{equation}\label{6-7-2}
\begin{aligned}
    &^{\sigma}\mathbb{D}^{\alpha}_{t}\|\theta^{n}\|^{2}+\|\nabla \theta^{n}\|^{2}\\
    &\lesssim \|^{C}\mathcal{D}^{\alpha}_{t_{n-\sigma}}\rho\|~\| \theta^{n}\|+\|\mathcal{T}_{w}^{n-\sigma}\|~\| \theta^{n}\|+\left(1+\|\nabla u^{n-\sigma}\|^{2}\right)\|\nabla \mathcal{L}_{w}^{n-\sigma}\|~\|\nabla \theta^{n}\|\\
    &+\left(\|\nabla u^{n-\sigma}\|+\|\nabla \tilde{u}_{h}^{n-1,\sigma}\|\right)\left(\|\nabla \tilde{\mathcal{L}}_{u} ^{n-\sigma}\|+\|\nabla \tilde{\rho}^{n-1,\sigma}\|+\|\nabla \tilde{\theta}^{n-1,\sigma}\|\right)\|\nabla w^{n,\sigma}\|~\|\nabla \theta^{n}\|\\
    &+\left(\mathcal{Q}^{n-\sigma}_{w},\theta^{n}\right)+\sum_{j=1}^{n-1}\tilde{\tau}_{j}\|\nabla \theta^{j}\|~\|\nabla \theta^{n}\|+\|\nabla \theta^{n-1}\|~\|\nabla \theta^{n}\|.
    \end{aligned}
\end{equation}
Definition of $~^{\sigma}\mathbb{D}^{\alpha}_{t}\|\theta^{n}\|^{2}$ \eqref{SA6}, approximation properties of modified Ritz-Volterra projection operator \eqref{23-6-5a}-\eqref{23-6-7} and local truncation errors \eqref{25-5-2}, \eqref{25-5-6}, \eqref{25-5-7}, \eqref{27-5-9}  yield
\begin{equation}\label{6-7-2a}
\begin{aligned}
    &\|\theta^{n}\|^{2}+c_{n,n}^{-1}\|\nabla \theta^{n}\|^{2}\\
    &\lesssim c_{n,n}^{-1}h^{2}~\| \theta^{n}\|+c_{n,n}^{-1}t_{n-\sigma}^{-\alpha}~N^{-\min\left\{r\alpha,~ 3-\alpha\right\}}~\| \theta^{n}\|\\
    &+c_{n,n}^{-1}\left(1+\|\nabla u^{n-\sigma}\|^{2}\right)N^{-\min\left\{r\alpha,~ 2\right\}}~\|\nabla \theta^{n}\|\\
    &+c_{n,n}^{-1}\left(\|\nabla u^{n-\sigma}\|+\|\nabla \tilde{u}_{h}^{n-1,\sigma}\|\right)\left(N^{-\min\left\{r\alpha,~ 2\right\}}+h+\|\nabla \tilde{\theta}^{n-1,\sigma}\|\right)\|\nabla w^{n,\sigma}\|\|\nabla \theta^{n}\|\\
    &+c_{n,n}^{-1}N^{-\min\left\{r(1+\alpha),~2 \right\}}\|\nabla \theta^{n}\|+c_{n,n}^{-1}\sum_{j=1}^{n-1}\tilde{\tau}_{j}\|\nabla \theta^{j}\|~\|\nabla \theta^{n}\|+c_{n,n}^{-1}\|\nabla \theta^{n-1}\|~\|\nabla \theta^{n}\|\\
    &+c_{n,n}^{-1}\sum_{j=1}^{n-1}\left(c_{n,j+1}-c_{n,j}\right)\|\theta^{j}\|^{2}.
    \end{aligned}
\end{equation}
Using the definition of weighted $H^{1}(\Omega)$ norm \eqref{22-6-4} and  a priori bound  on weak solution $u$ \eqref{23-6-2}, fully discrete solution $u_{h}^{n}$ \eqref{22-6-4y} and modified Ritz-Volterra projection operator \eqref{23-6-5aa} in \eqref{6-7-2a} we obtain 
\begin{equation}\label{6-7-2a1}
\begin{aligned}
     \||\theta^{m}\||
    &\lesssim c_{m,m}^{-1}h^{2}+c_{m,m}^{-1}t_{m-\sigma}^{-\alpha}~N^{-\min\left\{r\alpha,~ 3-\alpha\right\}}+c_{m,m}^{-1/2}N^{-\min\left\{r\alpha,~ 2\right\}}\\
    &+c_{m,m}^{-1/2}\left(N^{-\min\left\{r\alpha,~ 2\right\}}+h\right)+c_{m,m}^{-1/2}N^{-\min\left\{r(1+\alpha),~2 \right\}}\\
    &+c_{m,m}^{-1/2}\sum_{j=1}^{m-1}\tilde{w}_{j}c_{j,j}^{1/2}\|| \theta^{j}\|| +c_{m,m}^{-1}\sum_{j=1}^{m-1}\left(c_{m,j+1}-c_{m,j}\right)\||\theta^{j}\||,
    \end{aligned}
\end{equation}
where 
\begin{equation}
   \tilde{w}_{j}=\begin{cases}
   \tilde{\tau}_{j}, & 1\leq j \leq m-3,\\
            1+\tilde{\tau}_{j}, & j= m-2,\\
            2+\tilde{\tau}_{j}, & j= m-1.
\end{cases}
\end{equation}
Using  $c_{m,m}^{-1}\lesssim  \tau_{m}^{\alpha} \lesssim t_{m}^{\alpha}$ and discrete Gronwall inequality we obtain 
 \begin{equation}
 \begin{aligned}
     \||\theta^{m}\||&\lesssim \left(h+N^{-\min\left\{r\alpha,~ 2\right\}}\right)\exp\left(\sum_{j=1}^{m-1}\tilde{w}_{m,j}\right),
     \end{aligned}
 \end{equation}
 where $
     \tilde{w}_{m,j}=c_{m,m}^{-1/2}\tilde{w}_{j}c_{j,j}^{1/2}+c_{m,m}^{-1}\left(c_{m,j+1}-c_{m,j}\right)$.
Follow the proof of estimate \eqref{22-6-7e} to conclude 
 \begin{equation}
     \||\theta^{m}\||\lesssim \left(h+N^{-\min\left\{r\alpha,~ 2\right\}}\right).
 \end{equation}
  Finally triangle inequality and the best approximation property \eqref{23-6-5a} implies 
 \begin{equation}
     \|u^{m}-u_{h}^{m}\| \lesssim \|\rho^{m}\|+\|\theta^{m}\|\lesssim h^{2}+ h+N^{-\min\left\{r\alpha,~ 2\right\}} \lesssim h+N^{-\min\left\{r\alpha,~ 2\right\}}.
 \end{equation}
 By choosing $h=M^{-1}$, where $M$ is the degree of freedom in the space direction  we get
 \begin{equation}\label{8-7-1x}
     \|u^{m}-u_{h}^{m}\|\lesssim \left(M^{-1}+N^{-\min\left\{r\alpha,~ 2\right\}}\right).
 \end{equation}
 Similar approach will work for the case  $n=1$ also.\\
 
 \noindent To prove the estimate \eqref{8-7-1y} one case use \eqref{8-7-1z} but then we will have a loss of accuracy of order $\left(\frac{\alpha}{2}\right)$ as follows 
 \begin{equation}
     \|\nabla u^{m}-\nabla u_{h}^{m}\| \lesssim \tau_{m}^{-\alpha/2}\left(M^{-1}+N^{-\min\left\{r\alpha,~ 2\right\}}\right).
     \end{equation}
     To recover this loss of accuracy, we make use of the discrete Laplacian operator \eqref{8-7-1w}.
     Substitute $v_{h}=-\Delta_{h}\theta^{n}$ in \eqref{6-7-1} and follow the arguments of the proof of estimate \eqref{8-7-1x} to conclude \eqref{8-7-1y}.

\end{proof}

\section{Numerical experiment}\label{26-6-1}
In this section, we  present some  numerical results to  test the accuracy and efficiency of the proposed linearized fractional Crank-Nicolson-Galerkin scheme \eqref{27-5-9a}-\eqref{27-5-9b}.  At the first time level $t_{1}$, the scheme \eqref{27-5-9a} is nonlinear, so we apply  the modified  Newton-Raphson method \cite{gudi2012finite} with tolerance $10^{-7}$ as a stopping criterion. Using the value of $u_{h}^{1}$ from \eqref{27-5-9a}, we obtain $u_{h}^{n}~ (n\geq 2)$ \eqref{27-5-9b}.\\

\noindent The discrete $L^{\infty}(0,T;L^{2}(\Omega))$ and $L^{\infty}(0,T;H^{1}_{0}(\Omega))$ errors between  the exact solution $u$ and the  finite element solution $u_{h}$ are denoted by 
\begin{equation*}
\begin{aligned}
    \textbf{Error-1}&=\max_{1\leq n \leq N}\|u(t_{n})-u_{h}^{n}\|,\\
    \textbf{Error-2}&=\max_{1\leq n \leq N}\|\nabla u(t_{n})-\nabla u_{h}^{n}\|,
    \end{aligned}
\end{equation*}
respectively. The convergence rates are denoted by  \textbf{R-1} and \textbf{R-2}  with respect to \textbf{Error-1}  and  \textbf{Error-2}, respectively.\\

\noindent The following $\log$ vs. $\log$ formula is used to obtain the  convergence rates of the proposed numerical scheme \eqref{27-5-9a}-\eqref{27-5-9b}
\begin{equation*}
   \text{ Convergence rates} :=\begin{cases}\frac{\log(E(\tau,h_{1})/E(\tau,h_{2}))}{\log(h_{1}/h_{2})}& \text{in space direction},\\
   \frac{\log(E(\tau_{1},h)/E(\tau_{2},h))}{\log(\tau_{1}/\tau_{2})} & \text{in time direction},
\end{cases}
\end{equation*}
where $E(\tau,h_{1}),E(\tau,h_{2}),E(\tau_{1},h),E(\tau_{2},h)$ are the errors  at different mesh points.\\

\noindent 
 We consider the problem \eqref{22-5-1} in  the two dimensional domain  $\Omega=[0,1]\times[0,1]$ and $T=1$,  with the source function
 \begin{equation}\label{1-8-22}
    f(x,y,t)=\Gamma(1+\alpha)(x-x^{2})(y-y^{2})-\left(t^{\alpha}+\frac{t^{3\alpha}}{45}-\frac{t^{1+\alpha}}{1+\alpha}\right)2(x^{2}+y^{2}-x-y),
\end{equation}
where $(x,y)\in \Omega$ and $t\in [0,T].$ With this source function \eqref{1-8-22}, the exact solution of the problem  \eqref{22-5-1} is given by  $ u(x,t)=t^{\alpha}(x-x^{2})(y-y^{2})$
which exhibits the weak singularity at $t=0$ and satisfies the regularity assumption \eqref{28-6-1}. \\ 

\noindent \textbf{Errors and convergence rates in the space direction:} We fix the number of points in the time direction i.e $N=150$ so that spatial error dominates the convergence rates. We see  the spatial convergence rates  by running the MATLAB  code at different points (M) in the space direction.
\begin{table}[htbp]
	\centering
	\caption{\textbf{Errors and convergence rates in the space  direction for \boldmath$r=1$ }}
 	\begin{tabular}{|c|c|c|c|c|}
 	\hline
    &\multicolumn{2}{|c|}{\boldmath$\alpha=0.4$}&\multicolumn{2}{|c|}{\boldmath$\alpha=0.6$}\\
    \hline

		\textbf{M} & \textbf{Error-1 (R-1)} &\textbf{Error-2 (R-2)}  &  \textbf{Error-1 (R-1)}& \textbf{Error-2 (R-2)}\\
 		\hline

 		3 &1.88e-02 (-----) &  8.24e-02 (-----)   & 1.88e-02 (-----) & 8.25e-02 (-----)\\
 		
 		5 &5.71e-03 (1.724) & 4.38e-02 (0.911) & 5.71e-03 (1.724) & 4.38e-02 (0.911)\\
		
		9  &1.51e-03 (1.923) & 2.22e-02 (0.977)& 1.51e-03 (1.927)  & 2.22e-02 (0.977) \\
 		
 		17  &3.89e-04 (1.954)& 1.11e-02 (0.994)  & 3.83e-04 (1.973)& 1.11e-02 (0.994) \\
		\hline
 		
 	\textbf{Rate} & \quad $\simeq$~~~\textbf{2} &  \quad$\simeq$~~~\textbf{1}  & \quad$\simeq$~~~\textbf{2}   &   \quad $\simeq$~~~\textbf{1}\\
 		\hline
 	\end{tabular}%
 	\label{table1}%
 \end{table}%
 
 \noindent By observing the  Table \ref{table1}, we obtain the first order accuracy in the $L^{\infty}(0,T;H^{1}_{0}(\Omega))$ norm as we have predicted in Theorem \ref{9-7-2}. On the other hand it is interested to note the quadratic rate of convergence in the  $L^{\infty}(0,T;L^{2}(\Omega))$ norm which is higher than the accuracy rate obtained in Theorem \ref{9-7-2}. This type of phenomenon was also observed  in the literature \cite{gudi2012finite,kundu2016kirchhoff} for Kirchhoff type problems. At this point we are not able to prove the second order accuracy in $L^{\infty}(0,T;L^{2}(\Omega))$ norm which will be analysed in future.\\

 \noindent \textbf{Errors and convergence rates in the time direction:} 
 \noindent We collect the errors and the convergence rates in the time direction for the case $r=1$ i.e. uniform mesh in time by taking  $N=\lfloor M^{1/\alpha}\rfloor$ in Table \ref{table2}.
 \begin{table}[htbp]
	\centering
	\caption{\textbf{Errors and convergence rates in the \boldmath$L^{\infty}$ norm in the time direction for \boldmath$r=1$ }}
 	\begin{tabular}{|c|c|c|c|c|c|}
 	\hline
   & &\multicolumn{2}{|c|}{\boldmath$\alpha=0.4$}&\multicolumn{2}{|c|}{\boldmath$\alpha=0.6$}\\
    \hline

		\textbf{M} & \textbf{Error} & N&\textbf{Rate} & N& \textbf{Rate}\\
 		\hline

 		3 & 8.26e-02& 15 &  -----  & 6& ----- \\
 		
 		5 & 4.38e-03 & 55 & 0.468 & 14 & 0.661  \\
		
		9  &  2.22e-02 & 243 & 0.451  & 38 & 0.648 \\
 		
 		17  & 1.11e-02 & 1191 & 0.432 & 112 & 0.627 \\
 		
		\hline
		
		\textbf{Rate} &  & & $\simeq$~\textbf{0.4} & &  $\simeq$~\textbf{0.6}  \\
		
 		\hline
 	\end{tabular}%
 	\label{table2}%
 \end{table}%

 \noindent From Table \ref{table2}, we get the $\alpha$-convergence rate in the time direction as we have concluded in Theorem \ref{9-7-2}. Here we are getting less convergence rates in the time direction  because the solution of our problem \eqref{22-5-1} possess a weak singularity at $t=0$. To achieve the optimal convergence rate in the time direction for such type of solutions, we choose $r=2/\alpha$ and obtain the numerical results in Table \ref{table3} by setting $N=M$.\\
 
 \begin{table}[htbp]
	\centering
	\caption{\textbf{Errors and convergence rates in the \boldmath$L^{\infty}$ norm in the time direction for \boldmath$r=2/\alpha$ }}
 	\begin{tabular}{|c|c|c|c|c|}
 	\hline
    &\multicolumn{2}{|c|}{\boldmath$\alpha=0.4$}&\multicolumn{2}{|c|}{\boldmath$\alpha=0.6$}\\
    \hline

		\textbf{M=N} & \textbf{Error } &\textbf{Rate}  &  \textbf{Error}& \textbf{Rate}\\
 		\hline

 		3 & 2.21e-02  &  -----   & 2.06e-02  &  -----\\
 		
 		5 & 8.52e-03  & 1.371 & 7.27e-03  & 1.502\\
		
		9  & 2.77e-03 & 1.619 & 2.15e-03  & 1.755 \\
 		
 		17  & 8.06e-04  & 1.782  & 5.85e-04 & 1.881 \\
		\hline
		
 	\end{tabular}%
 	\label{table3}%
 \end{table}%
 \noindent In Table \ref{table3}, we see that the convergence rate is slow. This is because of the graded mesh on $[0,T]$. In the graded mesh on $[0,T]$, we have large time steps near final time $T$ and very small time steps near $t=0$ (see Figure 1).
 \begin{figure}[H]\label{6-8-2}
	\centering
\includegraphics[scale=0.6]{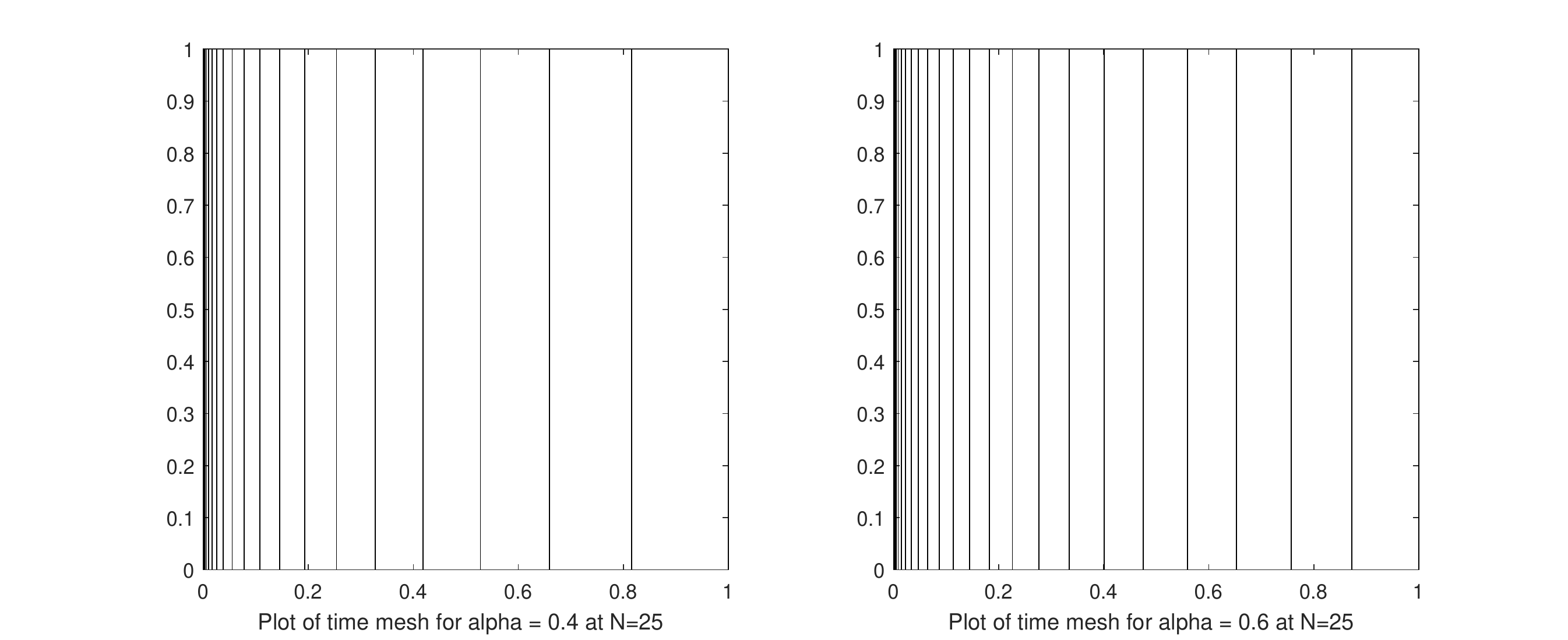}
\caption{ Graded mesh on $[0,T]$ for N=25.}
\end{figure}

\noindent This phenomena reduces the numerical resolution of the scheme near final time $T$. To overcome this difficulty, we divide the  time interval $[0,T]$ into two parts $[0,T_{0}]$ and $[T_{0},T]$ for sufficiently small $T_{0}=\min\{\frac{T}{2^{r}},(1-\frac{1}{r})T\}$ \cite{chen2019error}. We apply the graded mesh on the first part $[0,T_{0}]$ by taking the number of points $N_{0}=\lceil \rho_{0}N \rceil$ with $\rho_{0}=\min\{\frac{r}{2^{r}-1+r},\frac{r(r-1)}{1+r(r-1)}\}$ and uniform mesh on $[T_{0},T]$ with uniform time step $\tau_{0}=\frac{T-T_{0}}{N-N_{0}}$ (see Figure 2).  We choose these $T_{0},N_{0}$ and $\tau_{0}$ and collect the numerical results in Table \ref{table4} for $r=2/\alpha$.\\
 \begin{figure}[H]\label{6-8-3}
	\centering
\includegraphics[scale=0.6]{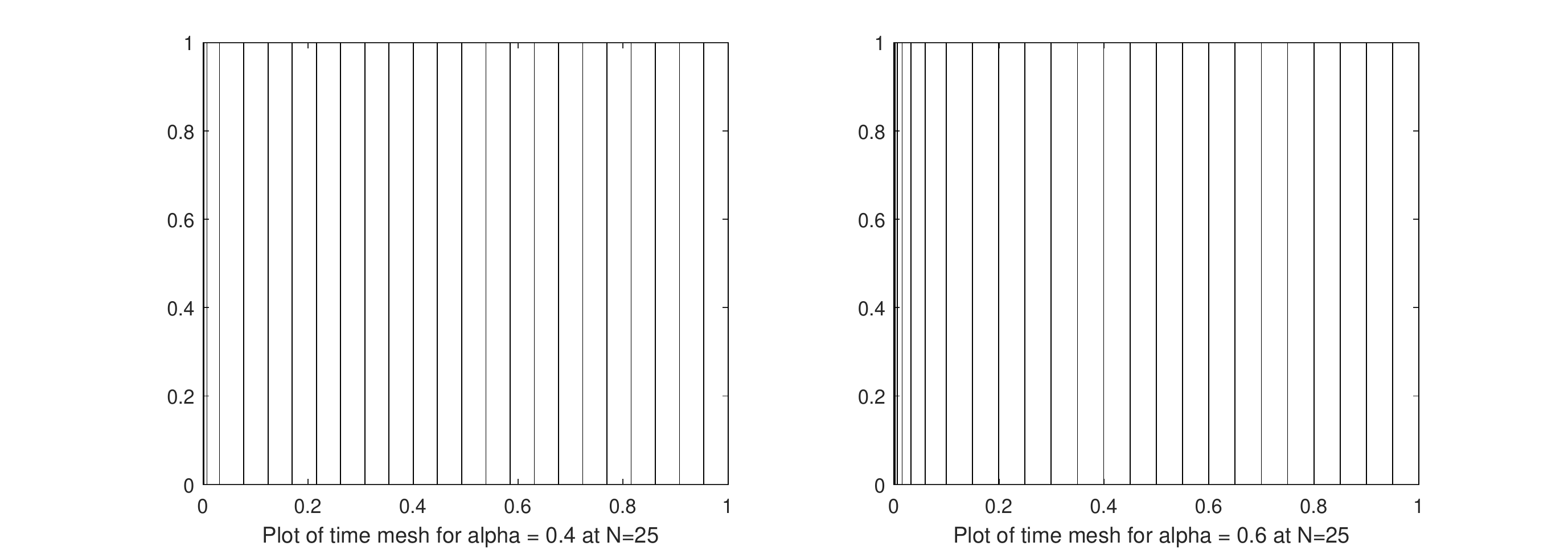}
\caption{ Graded mesh on $[0,T_{0}]$ and uniform mesh on $[T_{0},T]$ for N=25.}
\end{figure}

 \begin{table}[htbp]
	\centering
	\caption{\textbf{Errors and convergence rates in the \boldmath$L^{\infty}$ norm in the time direction for \boldmath$r=2/\alpha$ }}
 	\begin{tabular}{|c|c|c|c|c|}
 	\hline
    &\multicolumn{2}{|c|}{\boldmath$\alpha=0.4$}&\multicolumn{2}{|c|}{\boldmath$\alpha=0.6$}\\
    \hline

		\textbf{M=N} & \textbf{Error } &\textbf{Rate}  &  \textbf{Error}& \textbf{Rate}\\
 		\hline

 		3 & 2.07e-02  &  -----   & 1.96e-02  &  -----\\
 		
 		5 & 6.76e-03  & 1.619 & 6.49e-03  & 1.596\\
		
		9  & 1.91e-03 & 1.826 & 1.73e-03  & 1.906 \\
 		
 		17  & 4.75e-04  & 2.005  & 4.41e-04 & 1.973  \\
		\hline
		
			\textbf{Rate} &  &$\simeq$~~\textbf{2} & &  $\simeq$~~\textbf{2}   \\
 		\hline
 	\end{tabular}%
 	\label{table4}%
 \end{table}%
 \newpage
 \noindent In Table \ref{table4}, we observe the  optimal convergence rate in the time direction faster than the previous one. These results confirm the theoretical estimates as  we have proved in Theorem \ref{9-7-2}. One can choose $r > \frac{2}{\alpha}$  to obtain the second order accuracy in time but with a constant factor that grows with $r$. Hence $r = \frac{2}{\alpha}$ is the optimal grading parameter in the time graded mesh which gives the optimal   rate of convergence for the non-smooth solutions of  the problem \eqref{22-5-1}.\\ 
 
 \noindent Now, we plot the graph of an approximate solution as well as an exact solution in Figure 3 using numerical scheme \eqref{27-5-9a}-\eqref{27-5-9b} for  $\alpha=0.5$.
 
\begin{figure}[H]\label{6-8-4}
	\centering
\includegraphics[scale=0.5]{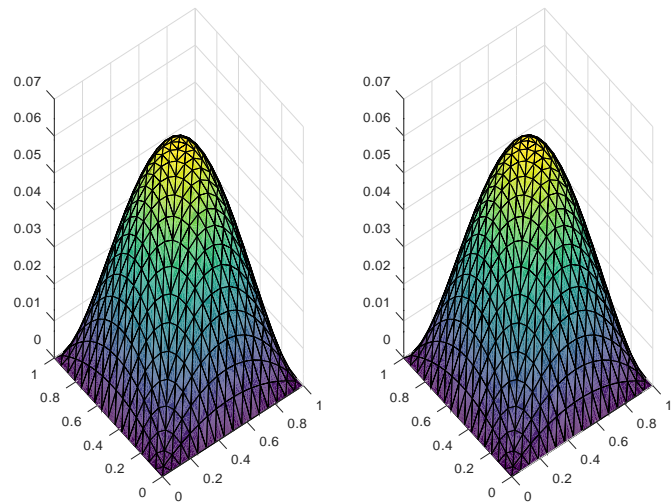}
\caption{Approximate solution (L.H.S) and Exact solution (R.H.S) at $T=1$  and $\alpha =0.5$.}
\end{figure}

\section{Concluding remarks}
In this work, we  developed a linearized fractional Crank-Nicolson-Galerkin scheme \eqref{27-5-9a}-\eqref{27-5-9b} which reduces the computational cost in comparison to the Newton-Raphson method. Using a weighted $H^{1}(\Omega)$ norm \eqref{22-6-4}, we derived a priori bounds \eqref{1-7-1} and convergence estimates \eqref{8-7-1z}-\eqref{8-7-1y} for the solutions of the proposed numerical scheme. We proved that the convergence rates are of $O\left(M^{-1}+N^{-2}\right)$ in $L^{\infty}(0,T;L^{2}(\Omega))$ norm as well as in $L^{\infty}(0,T;H^{1}_{0}(\Omega))$ norm. Finally we  offered  the numerical results which perfectly support the sharpness of the theoretical claims.

\bibliographystyle{plain}
\bibliography{FP3BIB}

\end{document}